\newtheorem{Definition}{Definition}[section]
\newtheorem{Thm}{Theorem}
\newtheorem{Cor}[Thm]{Corollary}
\newtheorem{Corollary}[Definition]{Corollary}
\newtheorem{Proposition}[Definition]{Proposition}
\newtheorem{Lemma}[Definition]{Lemma}
\newtheorem{Remark}[Definition]{Remark}
\newtheorem*{Claim}{Claim}
\numberwithin{equation}{section}
\title{Stretched-exponential mixing for surface semiflows and Anosov flows}
\date{\today}
\subjclass[2020]{Primary: 37A25, 37C30; Secondary: 37D20.}
\keywords{Rate of mixing, Anosov flow, Hyperbolic flow, SRB measure}
\author[Daofei Zhang]{Daofei Zhang}
\address{Mathematics Institute, University of Warwick, Coventry, CV4 7AL, UK}
 \email{Daofei.Zhang@warwick.ac.uk}
\begin{document}

\begin{abstract}
For a surface semiflow that is a suspension of a \( C^{1+\alpha} \) expanding Markov interval map, we prove that, under the assumptions that the roof function is Lipschitz continuous and not cohomologous to a locally constant function, the semiflow exhibits stretched-exponential mixing with respect to the SRB measure. This result extends to hyperbolic skew-product semiflows and hyperbolic attractors. Specifically, codimension-one topologically mixing Anosov flows with Lipschitz continuous stable foliations demonstrate stretched-exponential mixing with respect to their SRB measures.
\end{abstract}

\maketitle

\section{Introduction}\label{sec 1}

The study of mixing rates for hyperbolic flows has a long history and is a fundamental problem in the ergodic theory of hyperbolic systems, following the seminal works of Sinai, Ruelle, and Bowen \cite{Bow75, Bow08, Rue76, Sin72}. Unlike hyperbolic diffeomorphisms, which exhibit exponential mixing with respect to Gibbs measures (the equilibrium states associated with H\"older potentials), the investigation of mixing rates for hyperbolic flows presents significant challenges. In \cite{Bow75}, Bowen and Ruelle posed the question of whether hyperbolic flows are exponentially mixing with respect to Gibbs measures. However, examples exist of hyperbolic flows that do not mix exponentially, with mixing occurring at arbitrarily slow rates \cite{Pol85, Rue83}. This realization prompted a more focused investigation into Anosov flows. Initially, results were sparse and often pertained to systems with additional algebraic structures. Notable examples include geodesic flows on hyperbolic surfaces \cite{Col84, Moo87, Rat87} and hyperbolic three-manifolds \cite{Pol92}. In these cases, representation theory provided the necessary tools for analysis, but the techniques used in these proofs do not generalize to all Anosov flows.

In \cite{Pol85}, Pollicott laid foundational work by establishing the connection between mixing rates and the spectral gaps of the associated complex transfer operator. Furthermore, in \cite{Pol84}, he pioneered a systematic study of the spectral properties of complex transfer operators. However, the results obtained in \cite{Pol84} fell short of proving the mixing rate, as the spectral gap was not quantified. A substantial breakthrough in this area began with Dolgopyat's seminal work in \cite{Dol98a}, which built on the pioneering work in \cite{Che98}. Dolgopyat provided a remarkable mechanism to ensure that the associated complex transfer operator has a uniform spectral gap, implying that the Anosov flow is exponentially mixing. The basic setting of this mechanism involves a smooth expanding map, a \(C^1\) roof function, and an associated Gibbs measure that satisfies a doubling property. This applies particularly to topologically mixing three-dimensional Anosov flows with smooth stable and unstable subbundles and all Gibbs measures.\footnote{The doubling property may not be satisfied for all Gibbs measures, as pointed out by Baladi and Vallée in \cite{Bal05}. Nonetheless, a weaker doubling property can still be proven, as shown in \cite{Dal21, Dal24}, which is sufficient to complete the proof.} Even without smooth stable and unstable subbundles, Dolgopyat managed to obtain a certain quantified spectral gap of the complex transfer operator, which can be bounded by the frequency of the Laplace transform of the correlation function \cite{Dol98a}. Consequently, any jointly non-integrable Anosov flow is rapidly mixing with respect to Gibbs measures. In \cite{Dol98b}, Dolgopyat also provided various conditions under which general hyperbolic flows exhibit rapid mixing with respect to Gibbs measures. For a more detailed introduction to rapid mixing and an additional condition that ensures rapid mixing, one can refer to \cite{Fie07,Mel18,Zha24}.

In the spirit of \cite{Dol98a}, significant progress has been made. In \cite{Liv04}, it was proven that Anosov flows preserving a smooth contact form are exponentially mixing with respect to the SRB measure, which is the Gibbs measure associated with the geometric potential. Generally, dealing with SRB measures is easier than with general Gibbs measures since SRB measures are absolutely continuous with respect to volume measures and thus satisfy the doubling property. In \cite{Giu13, Giu22}, the authors proved exponential mixing for a contact Anosov flow with a certain pinching condition with respect to the measure of maximal entropy, which is the Gibbs measure associated with the constant potential. The authors in \cite{Ara16a} generalized Dolgopyat's proof for Anosov flows to codimension-one hyperbolic attractors for SRB measures. The requirement for the regularity of stable and unstable foliations was relaxed in \cite{But20}. In \cite{Dal21, Dal24}, it was shown that for codimension-one hyperbolic attractors, Gibbs measures satisfying a certain weaker doubling property are sufficient to ensure exponential mixing. In \cite{Tsu23}, it was demonstrated that smooth Anosov flows are exponentially mixing with respect to Gibbs measures without any regularity assumption on stable or unstable foliations. The authors in \cite{Mel23} constructed a Young tower for any hyperbolic attractor, thereby proving exponential mixing for any hyperbolic attractor with a smooth stable foliation with respect to the SRB measure. In \cite{Gou19, Sto23}, it was demonstrated that contact Anosov flows are exponentially mixing with respect to regular Gibbs measures. This applies, in particular, to geodesic flows on three-dimensional manifolds of negative curvature and all Gibbs measures.

All of the aforementioned works reveal that the mixing rate of an Anosov flow is highly dependent on the oscillation of the temporal distance function. For instance, if the flow is jointly non-integrable, the temporal distance function does not vanish and exhibits small oscillations in a local area. Furthermore, its oscillation can be controlled by certain H\"older bounds, as the temporal distance function is H\"older continuous due to the stable and unstable subbundles being H\"older continuous. This property is sufficient to ensure that the flow is rapidly mixing \cite{Dol98a, Dol98b}. If we further assume that the temporal distance function has a globally high oscillation and can be controlled by certain smooth bounds—particularly in cases where the Anosov flow preserves a smooth contact form \cite{Liv04} or where the stable and unstable subbundles are smooth—then the flow enjoys exponential mixing \cite{Dol98a, Liv04}. Recently, it was proved in \cite{Tsu23} that if \(g_{t}\) is a \(C^{\infty}\) three-dimensional Anosov flow, then the temporal distance function can also be well controlled, resulting in exponential mixing.

The aim of this paper is to contribute to the study of mixing rates for flows and semiflows with uniform hyperbolicity. We focus on systems where the stable foliation (if it exists) and the unstable foliation have lower smoothness than \(C^{1}\). Specifically, we assume that the temporal distance function of an Anosov flow exhibits small oscillations in a local area and is less well-controlled by certain Lipschitz bounds. This scenario arises when the stable subbundle is Lipschitz continuous and the flow is jointly non-integrable. In this case, we prove that the Anosov flow exhibits stretched-exponential mixing with respect to the SRB measure (Corollary \ref{Stretched exponential mixing for Anosov flow}). Classically, we can reduce this scenario to that of a surface semiflow over an expanding Markov interval map. Under the assumption that the roof function is Lipschitz continuous and not cohomologous to a locally constant function, we show that the surface semiflow exhibits stretched-exponential mixing with respect to the SRB measure (Theorem \ref{Stretch exponential mixing for semiflow}).

This paper is organized as follows:

\begin{itemize}
\item In Section \ref{sec 2}, after reviewing foundational definitions, we state the main results more precisely. Subsection \ref{subsec 2.1} presents our findings regarding surface semiflows, specifically suspension semiflows over \(C^{1+\alpha}\) expanding Markov interval maps (refer to Theorem \ref{Stretch exponential mixing for semiflow}). Subsection \ref{subsec 2.2} applies these results to suspension semiflows over hyperbolic skew-products (see Corollary \ref{Stretch exponential mixing for hyperper skew product}). Finally, Subsection \ref{subsec 2.3} discusses results concerning Anosov flows and more general hyperbolic attractors (see Corollaries \ref{Stretched exponential mixing for hyperbolic attractor} and \ref{Stretched exponential mixing for Anosov flow}).
	
\item In Section \ref{sec 3}, following the classical approach pioneered by Pollicott \cite{Pol85}, we reduce Theorem \ref{Stretch exponential mixing for semiflow} to an estimate on a specific norm of the associated complex transfer operator \(\mathcal{L}_{s}\) and its resolvent (see Proposition \ref{Prop 3.4}). The proof of Theorem \ref{Stretch exponential mixing for semiflow} is provided at the end of this section. 
	
\item Section \ref{sec 4} establishes a Dolgopyat type estimate (Proposition \ref{Dolgopyat type estimate}) for the transfer operator \(\mathcal{L}_{ib}\). The main component of its proof is a cancellation lemma (Lemma \ref{Lemma 4.4}), which we address in the subsequent section. The estimate concerning \(\mathcal{L}_{s}\) as stated in Proposition \ref{Prop 3.4} is derived using a perturbation argument.
	
\item In Section \ref{sec 5}, we present the proof of Lemma \ref{Lemma 4.4}, leveraging the assumptions of Theorem \ref{Stretch exponential mixing for semiflow}. Since the roof function \(r\) is not smooth, we do not have the uniformly non-integrable (UNI) condition which is used by Dolgopyat to prove exponential mixing. However, under the assumption that $r$ is not cohomologous to locally constant functions, we establish a weaker UNI condition (Lemma \ref{Lemma 5.1}), applicable even when \(r\) is H\"older continuous. This stage yields a weak cancellation estimate on a small set, sufficient for proving rapid mixing. Further assuming \(r\) is Lipschitz continuous, along with Lemma \ref{Lemma 5.1} and domain connectivity, allows us to achieve a cancellation estimate on a large set with uniformly bounded Lebesgue measure, thereby completing the proof of Lemma \ref{Lemma 4.4}.
\end{itemize}

\noindent\textit{Acknowledgements}. I would like to thank my supervisor Mark Pollicott for his insightful suggestions and comments throughout the writing of this paper. The author is supported by the Warwick Mathematics Institute Centre for Doctoral Training and gratefully acknowledges funding from the ERC Advanced Grant 833802-Resonances.

\section{Statement of main results}\label{sec 2}

\subsection{Stretched-exponential mixing for surface semiflows}\label{subsec 2.1}

Let $\{I_{i}\}_{i=1}^{N}$ be finitely many pairwise disjoint bounded open intervals of $\mathbb{R}$. Denote $I:=\bigsqcup_{i}I_{i}$. Let $\alpha\in(0,1)$.	

\begin{Definition}
A surjection $T:I\to I$ is called an $C^{1+\alpha}$ expanding Markov interval map if it satisfies the following properties:
	\begin{itemize}
		\item Markov property: whenever $I_{i}\cap T^{-1}I_{j}\not=\emptyset$, we have $T$ is a $C^{1+\alpha}$ diffeomorphism from $I_{i}\cap T^{-1}I_{j}$ to $I_{j}$ and can be extended to the boundary. By this, we mean there exist an open interval $I^{\prime}$ which strictly contains $I_{i}\cap T^{-1}I_{j}$ and a $C^{1+\alpha}$ diffeomorphism $\widetilde{T}$ from $I^{\prime}$ to $\widetilde{T}I^{\prime}$ such that $\widetilde{T}|_{I_{i}\cap T^{-1}I_{j}}=T|_{I_{i}\cap T^{-1}I_{j}}$.
		\item Expansion: there exist $C>0$ and $\lambda\in(0,1)$ such that for any $n\in\mathbb{N}^{+}$, whenever $T^{n}(x)\in I$ we have $|\partial T^{n}(x)|\ge C\lambda^{-n}$, where $\partial$ denotes the differentiation.
	\end{itemize}
\end{Definition}

Given an expanding Markov interval map $T:I\to I$, we can associate it to an one-sided subshift of finite type. The related transition matrix $A$ is defined by 
$$
A(i,j)=\begin{cases}
	1,\quad & \text{if}\ I_{i}\cap T^{-1}I_{j}\not=\emptyset;\\
	0,\quad &\text{otherwise.}
\end{cases} 
$$
Therefore, we can view expanding Markov interval maps as a smooth version of one-sided subshifts. We always assume that \(T\) is topologically mixing, which is equivalent to \(A^{n} > 0\) for some \(n \in \mathbb{N}^{+}\). For convenience, we introduce some notation. For each \(1 \leq i \leq N\), we set \([i] := I_{i}\). For any \(n \in \mathbb{N}^{+}\) and each \(1 \leq i_{0}, \ldots, i_{n} \leq N\), when \([i_{0}] \cap \cdots \cap T^{-n}[i_{n}] \neq \emptyset\), we set \([i_{0} \cdots i_{n}] := [i_{0}] \cap \cdots \cap T^{-n}[i_{n}]\). By definition, for each \(n \in \mathbb{N}^{+}\) and \([i_{0} \cdots i_{n}]\), \(T^{n}\) is a diffeomorphism from \([i_{0} \cdots i_{n}]\) to \([i_{n}]\). We denote by \(\mathcal{H}_{n}\) the set of inverse branches of \(T^{n}\). In other words, an element \(y\) in \(\mathcal{H}_{n}\) has the form \(y := (T^{n} |_{[i_{0} \cdots i_{n}]})^{-1}\).

For $\alpha\in(0,1)$, let $C^{\alpha}(I)$ be the Banach space of $\alpha$-H\"older continuous functions on $I$ with the H\"older norm $||\cdot||_{\alpha}:=|\cdot|_{\infty}+|\cdot|_{\alpha}$, where $|\cdot|_{\alpha}$ denotes the usual $\alpha$ semi-norm. The Banach space of Lipschitz continuous functions on $I$ is denoted by $C^{Lip}(I)$ with the Lipschitz norm $||\cdot||_{\alpha}:=|\cdot|_{\infty}+|\cdot|_{Lip}$, where $|\cdot|_{Lip}$ presents the Lipschitz constant. We say a function $h\in C^{Lip}(I)$ is not cohomologous to locally constant functions if there is no $\chi$ which is constant on each $[ij]$ and $v\in C^{Lip}(I)$ such that $h=\chi+v\circ T-v$. Denote by $\varphi$ the geometric potential of $T$, namely, $\varphi(x)=-\log|\partial T(x)|$, which belongs to $C^{\alpha}(I,\mathbb{R})$ since $T$ is $C^{1+\alpha}$. We can associate $\varphi$ with a $T$-invariant probability measure $\mu_{\Phi}$ on $I$, which is absolutely continuous with respect to the normalized Lebesgue measure on $I$ and is called the SRB measure of $T$ \cite{Bal00}.

Our surface semiflows are suspension semiflows over  expanding Markov interval maps. Let $T:I\to I$ be a $C^{1+\alpha}$ expanding Markov interval maps, and let $r:I\to\mathbb{R}$ be a continuous function with $\inf_{x\in I}r(x)>0$. We define the suspension space as $I_{r}:=\{(x,u):0\le u\le r(x)\}/\sim$, where $(x,r(x))\sim(T(x),0)$. We also define the suspension semiflow $\phi_{t}$ by 
$$
\phi_{t}:I_{r}\to I_{r},\quad \phi_{t}(x,u)=(x,u+t),\quad t\ge0,
$$
with respect to $\sim$ on $I_{r}$. In the above definition, we call $\phi_{t}:I_{r}\to I_{r}$ the suspension semiflow $T$ under $r$. The SRB measure $\mu_{\varphi}$ induces a natural probability measure $\mu_{\varphi}\times\text{Leb}$ on $I_{r}$ by
$$
\int_{I_{r}}Fd(\mu_{\varphi}\times\text{Leb})=\dfrac{1}{\int_{I}rd\mu_{\varphi}}\int_{I}\int_{0}^{r(x)}F(x,u)dud\mu_{\varphi}(x),\quad F\in C(I_{r}).
$$ 
It can be shown that $\mu_{\varphi}\times\text{Leb}$ is $\phi_{t}$-invariant \cite{Par90}.

Given two test functions $E,F: I_{r}\to \mathbb{C}$, we are interested in the decay rate of their correlation function defined by
$$\rho_{E,F}(t):=\int_{I_{r}}E\circ\phi_{t}. Fd\mu_{\varphi}\times\text{Leb}-\int_{I_{r}}Ed\mu_{\varphi}\times\text{Leb}\int_{I_{r}}Fd\mu_{\varphi}\times\text{Leb}.$$
We say $\phi_{t}$ is mixing with respect to $\mu_{\varphi}\times\text{Leb}$ if $\rho_{E,F}(t)$ tends to $0$ as $t\to+\infty$ for any $E,F\in L^{2}(\mu_{\varphi}\times\text{Leb})$.

\begin{Definition}\label{Def 2.2}
For a complex-valued function $E$ on $I_{r}$, define $$|E|_{\alpha}:=\sup_{x\not=y}\sup_{ u\in[0,\min\{r(x),r(y)\}]}\frac{|E(x,u)-E(y,u)|}{|x-y|^{\alpha}}.$$
Denote by $|E|_{\infty}:=\sup_{(x,u)}|E(x,u)|$. Let $C^{\alpha,1}(I_{r})$ be the set of all complex-valued functions $E$ on $I_{r}$ which satisfy $||E||_{\alpha,1}:=|E|_{\infty}+|E|_{\alpha}+|\partial_{u}E|_{\infty}<\infty$ where $\partial_{u}$ denotes the differentiation along the $u$-coordinate.
\end{Definition}

In this paper, under the assumption that $r\in C^{Lip}(I,\mathbb{R})$ and is not cohomologous to locally constant functions, we provide a mechanism so that the surface semiflow $\phi_{t}:I_{r}\to I_{r}$ enjoys the following form of stretch-exponential mixing with respect to $\mu_{\varphi}\times\text{Leb}$.

\begin{Thm}\label{Stretch exponential mixing for semiflow}
Let $\phi_{t}:I_{r}\to I_{r}$ be the suspension semiflow of  a $C^{1+\alpha}$ expanding Markov interval map $T:I\to I$ under $r$. If $r\in C^{Lip}(I,\mathbb{R})$ and is not cohomologous to locally constant functions, then there exist $C>0$ and $\delta>0$ such that
$$|\rho_{E, F}(t)|\le C||E||_{\alpha,1}||F||_{\alpha,1}e^{-\delta\sqrt{t}},$$
for any $E,F\in C^{\alpha,1}(I_{r})$ and any $t>0$.
\end{Thm}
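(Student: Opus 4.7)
The plan is to follow the Pollicott--Dolgopyat spectral framework and adapt it to the low-regularity setting of a Lipschitz roof. I would first normalize the transfer operator associated to the geometric potential $\varphi$ and introduce the twisted family
\[
(\mathcal{L}_{s}h)(x)=\sum_{Ty=x}e^{\varphi(y)-sr(y)}h(y).
\]
A standard Laplace-transform computation expresses the correlation $\rho_{E,F}(t)$ as a contour integral whose integrand involves the resolvent $(I-\mathcal{L}_{s})^{-1}$ acting on data built from the test functions $E,F\in C^{\alpha,1}(I_{r})$; this is precisely the content of Proposition~\ref{Prop 3.4}. Thus the mixing rate is controlled by how $\|(I-\mathcal{L}_{s})^{-1}\|$ behaves on contours close to the imaginary axis as $|\mathrm{Im}\,s|\to\infty$.

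Second, I would establish a Dolgopyat-type estimate on $\mathcal{L}_{ib}$. A Lasota--Yorke inequality is straightforward from the $C^{1+\alpha}$ expansion; the delicate part is a cancellation estimate for iterates of $\mathcal{L}_{ib}$ in an appropriate Dolgopyat norm. This reduces to Lemma~\ref{Lemma 4.4}, which asserts that after sufficiently many steps the oscillatory sum
\[
\sum_{y\in\mathcal{H}_{n}}e^{-ibS_{n}r(y(x))}h(y(x))
\]
experiences destructive interference on a set of uniformly positive Lebesgue measure, contracting the Dolgopyat norm by a factor of the form $1-c|b|^{-\eta}$ for some $\eta>0$. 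A standard perturbation argument then transfers this bound to $\mathcal{L}_{s}$ with $s=a+ib$ for $|a|\lesssim|b|^{-\eta}$, yielding a polynomial resolvent bound on a thin region to the left of the imaginary axis.

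The core difficulty lies in Lemma~\ref{Lemma 4.4}. In Dolgopyat's original scheme with $C^{1}$ roof functions one uses the uniform non-integrability (UNI) hypothesis, which is unavailable here. The substitute is Lemma~\ref{Lemma 5.1}: the assumption that $r$ is not cohomologous to any locally constant function produces pairs of inverse branches whose relative Birkhoff sums have a derivative bounded away from zero on some interval. This alone already gives a cancellation set depending badly on $b$, and is enough to derive rapid mixing. To upgrade it to a cancellation set of \emph{uniformly} positive Lebesgue measure, I would invoke the Lipschitz regularity of $r$ together with the connectivity of the Markov structure to propagate the local oscillation across $I$ via iterates of $T$. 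This boosting step is the main obstacle: it is exactly what distinguishes the stretched-exponential rate from mere rapid mixing, and it is where the Lipschitz hypothesis (rather than mere H\"older regularity) is genuinely used.

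Finally, I would convert the resolvent bound into the stated decay. Shifting the contour in the inverse Laplace transform by $\sigma(b)\sim |b|^{-\eta}$ introduces a factor $e^{-\sigma(|b|)t}$, which must be balanced against the polynomial growth of the resolvent and the $C^{\alpha,1}$ control of the transforms of $E,F$. Choosing the cutoff in $|b|$ optimally produces a stretched-exponential bound of the form $e^{-\delta t^{1/(1+\eta)}}$. The sharp exponent $\eta=1$, which must be extracted from the Lipschitz regularity of $r$ in the proof of Lemma~\ref{Lemma 4.4}, yields exactly the $e^{-\delta\sqrt{t}}$ rate asserted in Theorem~\ref{Stretch exponential mixing for semiflow}.
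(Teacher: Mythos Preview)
Your overall architecture matches the paper, but the quantitative shape of the Dolgopyat estimate is wrong in a way that would not deliver the theorem. In the paper, Lemma~\ref{Lemma 4.4} gives a contraction factor $(1-\delta_4)$ and a cancellation set of measure $\ge\delta_3$ that are \emph{both independent of $b$}, after a \emph{fixed} number $N_2$ of iterations. The only $b$-dependence enters afterwards, when the resulting $L^1$ contraction is upgraded to an $L^\infty$ contraction via the spectral gap of $\mathcal{L}_\varphi$ at the cost of $O(\log|b|)$ further iterations (Lemma~\ref{Lemma 4.5}). This yields $\|(1-\mathcal{L}_{ib})^{-1}\|_b\lesssim\log|b|$ and analyticity on the \emph{logarithmic} strip $|a|\lesssim 1/\log|b|$; the inverse Laplace integral then contains $e^{-ct/\log|b|}\,|b|^{-1-\alpha}\log|b|$, and a saddle point at $\log|b|\sim\sqrt t$ produces $e^{-\delta\sqrt t}$. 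Your proposed contraction $1-c|b|^{-\eta}$, by contrast, gives a resolvent $\sim|b|^\eta$ on a polynomial strip $|a|\lesssim|b|^{-\eta}$, and the contour integral $\int e^{-c|b|^{-\eta}t}|b|^{-\gamma}\,db$ decays only like $t^{(1-\gamma)/\eta}$, i.e.\ polynomially; it does \emph{not} yield $e^{-\delta t^{1/(1+\eta)}}$ (that expression is what a strip of width $(\log|b|)^{-\eta}$ would give). In short, a contraction of the form $1-c|b|^{-\eta}$ is precisely the weak estimate one obtains for a merely H\"older roof and recovers only rapid mixing.

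The Lipschitz hypothesis is also used more concretely than ``propagating oscillation via iterates of $T$''. From Lemma~\ref{Lemma 5.1} one fixes an interval $[x_1,x_2]$ and branches $y_1,y_2$ with $|R_{y_1,y_2}(x_1)-R_{y_1,y_2}(x_2)|\ge D$. Continuity of $R_{y_1,y_2}$ and the intermediate value theorem already provide $\gtrsim|b|$ points $x'_k\in[x_1,x_2]$ at which $R_{y_1,y_2}$ hits a lattice of mesh $\pi/(3|b|)$; a pigeonhole step then isolates $\gtrsim|b|$ consecutive pairs that are $\lesssim 1/|b|$ apart, and on each such pair one produces a point $w_j$ with $|\mathcal{L}_{ib}^{n}h(w_j)|\le(1-\delta)|h|_\infty$. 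The Lipschitz bound on $r$ is what forces the $x'_k$ to also be $\gtrsim 1/|b|$ apart, so that the balls $B(w_j,c/|b|)$ on which the cancellation persists are essentially disjoint and their union has $\mu_\varphi$-measure bounded below \emph{uniformly in $b$}. For a merely $\alpha$-H\"older roof the $x'_k$ are only $\gtrsim|b|^{-1/\alpha}$ apart, the balls overlap, and the cancellation set has measure $\lesssim|b|^{1-1/\alpha}\to0$; this is exactly why one cannot go beyond rapid mixing in that case, and why the $b$-independent version of Lemma~\ref{Lemma 4.4} is the heart of the argument.
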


In \cite{Dol98a}, it is proven that if the roof function \( r \) is \( C^{1} \), then the surface semiflow \( \phi_{t}:I_{r}\to I_{r} \) is exponentially mixing with respect to \( \mu_{\varphi} \times \text{Leb} \), meaning that \( |\rho_{E, F}(t)| = O(e^{-\delta t}) \). Based on the methods discussed in this paper (see Remark \ref{Remark on rapid mixing} for a more detailed explanation), it can also be demonstrated that if the roof function \( r \) is only \( C^{\alpha} \) for some \( \alpha \in (0,1) \), then the surface semiflow \( \phi_{t}:I_{r}\to I_{r} \) is rapidly mixing with respect to \( \mu_{\varphi} \times \text{Leb} \), meaning that \( |\rho_{E, F}(t)| = O(t^{-n}) \) for any \( n \in \mathbb{N}^{+} \). These two result even hold for any Gibbs measure \cite{Dal24,Dol98a,Dol98b}. It is interesting to investigate whether the weaker Doubling property obtained in \cite{Dal21, Dal24} can be used to generalize Theorem \ref{Stretch exponential mixing for semiflow} to any Gibbs measure.

\subsection{Application to hyperbolic skew-product semiflows}\label{subsec 2.2}

Let $T:I\to I$ be a $C^{1+\alpha}$ expanding Markov interval map. Recall that $I=\bigsqcup_{i=1}^{N}I_{i}$. Given a finite number of metric spaces $\{S_{i}\}_{i=1}^{N}$, let $\widehat{I}_{i}=I_{i}\times S_{i}$ and $\widehat{I}=\bigsqcup_{i=1}^{N}\widehat{I}_{i}$.

\begin{Definition}
We say that a skew product $\widehat{T}:\widehat{I}\to\widehat{I}$ given by $\widehat{T}(x,y)=(T(x),G(x,y))$ is a $C^{1+\alpha}$  hyperbolic skew-product if 
\begin{itemize}
	\item there exist $C>0$ and $\lambda\in(0,1)$ such that $d(\widehat{T}^{n}(x,y),\widehat{T}^{n}(x,z))\le C\lambda^{n}d(y,z)$ for any $n\in\mathbb{N}^{+}$ and any $(x,y), (x,z)\in \widehat{I}$;
	\item the skew map $G$ is Lipschitz continuous. 
\end{itemize}
\end{Definition}

Given a $C^{1+\alpha}$ hyperbolic skew-product $\widehat{T}:\widehat{I}\to\widehat{I}$, let $\pi:\widehat{I}\to I$ be the coordinate project which is a semi-conjugacy between $\widehat{T}$ and $T$. The SRB measure $\mu_{\varphi}$ on $I$ induces a $\widehat{T}$-invariant probability measure $m$ on $\widehat{I}$ which satisfies $\pi^{*}m=\mu_{\varphi}$ \cite{But17}.

Given a roof function \( r \in C(I, \mathbb{R}) \), we can extend \( r \) to a function on \(\widehat{I}\) by defining \( r(x,y) = r(x) \). We then define the suspension semiflow \(\psi_{t}: \widehat{I}_{r} \to \widehat{I}_{r}\) of \(\widehat{T}\) under \( r \). This suspension semiflow \(\psi_{t}: \widehat{I}_{r} \to \widehat{I}_{r}\) is called a hyperbolic skew-product semiflow. Let \( m \times \text{Leb} \) be the \(\psi_{t}\)-invariant probability measure on \(\widehat{I}_{r}\) induced by \( m \). Similarly to the definition of \( C^{\alpha,1}(I_{r}) \) in Definition \ref{Def 2.2}, we can analogously define the space \( C^{\alpha,1}(\widehat{I}_{r}) \) of test functions on \(\widehat{I}_{r}\). As a consequence of Theorem \ref{Stretch exponential mixing for semiflow}, we have the following form of stretch-exponential mixing for hyperbolic skew-product semiflows.

\begin{Cor}\label{Stretch exponential mixing for hyperper skew product}
Let $\psi_{t}:\widehat{I}_{r}\to \widehat{I}_{r}$ be a hyperbolic skew-product semiflow as described above. If $r\in C^{Lip}(I,\mathbb{R})$ and is not cohomologous to locally constant functions, then there exist $C>0$ and $\delta>0$ such that
$$\bigg|\int_{\widehat{I}_{r}}E\circ\psi_{t}. Fdm\times\text{Leb}-\int_{\widehat{I}_{r}}Edm\times\text{Leb}\int_{\widehat{I}_{r}}Fdm\times\text{Leb}\bigg|\le C||E||_{\alpha,1}||F||_{\alpha,1}e^{-\delta\sqrt{t}},$$
for any $E,F\in C^{\alpha,1}(\widehat{I}_{r})$ and any $t>0$.
\end{Cor}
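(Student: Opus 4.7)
The plan is to reduce the correlation on the hyperbolic skew-product semiflow $\psi_t$ to one on the base surface semiflow $\phi_t:I_r\to I_r$ and invoke Theorem \ref{Stretch exponential mixing for semiflow}. The key mechanism is the uniform exponential contraction of $\widehat T$ in the fiber direction: after flowing for time $s>0$, observables become nearly constant along fibers (at rate $\lambda^{\alpha s/|r|_\infty}$ in sup norm), while their H\"older regularity in the base variable deteriorates at the inverse rate. Balancing these against the base correlation decay yields the stretched-exponential bound.

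Fix $E,F\in C^{\alpha,1}(\widehat I_r)$ and $s\in(0,t)$ to be chosen. Writing $E\circ\psi_t=(E\circ\psi_s)\circ\psi_{t-s}$, observe that for any two points $(x,y,u),(x,y',u)$ on a common fiber, $\psi_s$ sends them to $(\widehat T^n(x,y),u')$ and $(\widehat T^n(x,y'),u')$ with a common integer $n=n(s,x,u)$ and a common $u'$ determined solely by $(x,u)$. The contraction hypothesis combined with the H\"older regularity of $E$ gives $|E\circ\psi_s(x,y,u)-E\circ\psi_s(x,y',u)|\le C||E||_{\alpha,1}\lambda^{\alpha n}$. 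Fixing a reference point $y_0\in S_i$ in each base interval and setting $\tilde E_s(x,u):=E\circ\psi_s(x,y_0,u)$ yields a function on $I_r$ with $|E\circ\psi_s-\tilde E_s\circ\pi|_\infty=O(||E||_{\alpha,1}\lambda^{\alpha s/|r|_\infty})$, where $\pi:\widehat I_r\to I_r$ is the base projection. Using the disintegration $m=\int\nu_x\,d\mu_\varphi(x)$, define the fiber average $\bar F(x,u):=\int F(x,y,u)\,d\nu_x(y)$. Since $\tilde E_s\circ\phi_{t-s}\circ\pi$ depends only on base coordinates and $\pi_{*}(m\times\text{Leb})=\mu_\varphi\times\text{Leb}$,
$$\int_{\widehat I_r} E\circ\psi_t\cdot F\,d(m\times\text{Leb})=\int_{I_r}\tilde E_s\circ\phi_{t-s}\cdot\bar F\,d(\mu_\varphi\times\text{Leb})+O\!\left(||E||_{\alpha,1}|F|_\infty\lambda^{\alpha s/|r|_\infty}\right),$$
with an analogous identity for the product of means by $\psi$-invariance of $m\times\text{Leb}$.

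Applying Theorem \ref{Stretch exponential mixing for semiflow} to the base correlation gives a bound of $C||\tilde E_s||_{\alpha,1}||\bar F||_{\alpha,1}e^{-\delta\sqrt{t-s}}$ on $|\rho^\phi_{\tilde E_s,\bar F}(t-s)|$. The expansion bound $|\partial T^n|\ge C\lambda^{-n}$ together with the Lipschitz continuity of $G$ shows $||\tilde E_s||_{\alpha,1}\le C\lambda^{-\alpha n(s)}||E||_{\alpha,1}$, while H\"older dependence of the disintegration $\nu_x$ on $x$ yields $||\bar F||_{\alpha,1}\le C||F||_{\alpha,1}$. Combining the estimates,
$$|\rho_{E,F}(t)|\le C||E||_{\alpha,1}||F||_{\alpha,1}\left(\lambda^{-\alpha s/|r|_\infty}e^{-\delta\sqrt{t-s}}+\lambda^{\alpha s/|r|_\infty}\right),$$
and choosing $s$ of order $\sqrt t$ balances the two terms and produces $e^{-\delta'\sqrt t}$ for some $\delta'>0$. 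The main obstacle is verifying H\"older regularity of $\nu_x$ in $x$ so that $\bar F$ has $C^{\alpha,1}(I_r)$-norm controlled uniformly by $||F||_{\alpha,1}$; this is a standard consequence of the interplay between fiber contraction and base expansion for $C^{1+\alpha}$ hyperbolic skew products, after which the remaining steps follow from the Markov expanding structure of $T$.
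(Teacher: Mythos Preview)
Your approach is correct and follows the same overall strategy as the paper: reduce the skew-product correlation to the base surface semiflow via the H\"older disintegration of $m$ along fibers, then invoke Theorem~\ref{Stretch exponential mixing for semiflow}. The paper's proof is essentially a citation: it appeals to \cite{But17} for the H\"older continuity of $x\mapsto\nu_x$ and to \cite[Equation~(6)]{But20} for the statement that $\rho^{\psi}_{E,F}(t)-\rho^{\phi}_{\bar E,\bar F}(t)$ is already $O(e^{-ct})$ with $\|\bar E\|_{\alpha,1},\|\bar F\|_{\alpha,1}$ uniformly bounded, so the stretched-exponential rate from the base carries over with no loss. Your version is more hands-on: by freezing a reference fiber point $y_0$ and flowing for time $s$, you pay $\lambda^{-\alpha n(s)}$ in the H\"older norm of $\tilde E_s$ and must balance this against the approximation error $\lambda^{\alpha n(s)}$, which forces $s\sim\sqrt t$ and recovers the same $e^{-\delta'\sqrt t}$ rate. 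Both routes rest on the same technical input (H\"older regularity of the disintegration), and the balancing step in yours is harmless here precisely because the target rate is stretched-exponential; had the base rate been genuinely exponential, your reduction would degrade it to stretched-exponential while the paper's would not.
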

\begin{proof}
It is routine (see \cite{But20, Dal21}) to reduce the mixing rate of \(\psi_{t}:\widehat{I}_{r}\to \widehat{I}_{r}\) with respect to \(m \times \text{Leb}\) to that of the surface semiflow \(\phi_{t}:I_{r}\to I_{r}\) with respect to \(\mu_{\varphi} \times \text{Leb}\). The key property required is that the measure \(m\) on \(\widehat{I}\) can be disintegrated into conditional measures along local stable leaves \(\bigsqcup_{x \in I} \pi^{-1}(x)\), and that this disintegration is H\"older continuous. This property holds true if the skew map \(G\) is Lipschitz continuous \cite{But17}. Consequently, the correlation functions between \(\psi_{t}:\widehat{I}_{r}\to \widehat{I}_{r}\) with respect to \(m \times \text{Leb}\) and \(\phi_{t}:I_{r}\to I_{r}\) with respect to \(\mu_{\varphi} \times \text{Leb}\) are exponentially small, as shown in \cite[Equation (6)]{But20}. For more details, refer to \cite[page 2261-2262]{But20}. Thus, the desired result follows from Theorem \ref{Stretch exponential mixing for semiflow}.
\end{proof}

\subsection{Application to codimension-one hyperbolic attractors and Anosov flows}\label{subsec 2.3}

Let \( g_{t}:M \to M \) be a \( C^{2} \) flow on a smooth compact Riemannian manifold \( M \), and let \( TM \) be the tangent bundle of \( M \). Let \( \Lambda \subset M \) be a compact \( g_{t} \)-invariant set. The dynamical system we are interested in is the flow \( g_{t} \) acting on \( \Lambda \) with the following properties:
\begin{enumerate}[$(1).$]
	\item \( \Lambda \) consists of more than a single closed orbit, and the closed orbits of \( g_{t} \) in \( \Lambda \) are dense;
	
	\item the flow \( g_{t} \) acts transitively on \( \Lambda \), and there exists an open neighbourhood \( U \) of \( \Lambda \) in \( M \) such that \( \Lambda = \bigcup_{t>0} g_{t} U \);
	
	\item we can express \( T_{\Lambda} M = E^{c} \oplus E^{s} \oplus E^{u} \), where \( E^{c} \) is the line bundle of the flow, and \( E^{s} \) and \( E^{u} \) are subspaces that remain invariant under the differential \( Dg_{t} \), with \( E^{s} \) contracting and \( E^{u} \) expanding. Specifically, there exist constants \( C > 0 \) and \( \delta > 0 \) such that \( \|Dg_{t} v\| \le Ce^{-\delta t} \|v\| \) for all \( v \in E^{s} \) and \( t \ge 0 \), and \( \|Dg_{-t} v\| \le Ce^{-\delta t} \|v\| \) for all \( v \in E^{u} \) and \( t \ge 0 \).
\end{enumerate}
Under these settings, \( \Lambda \) is commonly referred to as a hyperbolic attractor of \( g_{t} \) \cite{Par90}. In the case where $\Lambda=M$, we call $g_{t}:M\to M$ an Anosov flow. We say $g_{t}:\Lambda\to\Lambda$ is jointly non-integrable if the subbundle $E^{s}\oplus E^{u}$ is not integrable. We call $g_{t}:\Lambda\to\Lambda$ codimension-one if $\dim(E^{s})=1$ or $\dim(E^{u})=1$.  For convenience, we consistently assume that a codimension-one hyperbolic attractor under consideration has $\dim(E^{u})=1$. Alternatively, we can consider the reversed vector field. 

For any Hölder continuous real-valued function $\Phi$ on $\Lambda$, we can associate with it a $g_t$-invariant probability measure $\mu_{\Phi}$ called the Gibbs measure of $\Phi$ \cite{Bow75}. In the case where $\Phi$ being the geometric potential defined by $\Phi(x)=\lim_{t\to0}\frac{1}{t}\log \det Dg_{t}|_{E^{u}(x)}$, the corresponding Gibbs measure $\mu_{\Phi}$ is called the Sinai-Ruelle-Bowen(SRB) measure. By Corollary \ref{Stretch exponential mixing for hyperper skew product}, we have the following form of stretch-exponential mixing for codimension-one hyperbolic attractors.

\begin{Cor}\label{Stretched exponential mixing for hyperbolic attractor}
Let $\Lambda$ be a codimension-one hyperbolic attractor of a $C^{2}$ flow $g_{t}:M\to M$. Assume $E^{s}$ is Lipschitz continuous and $g_{t}:\Lambda\to\Lambda$ is jointly non-integrable, then there exist $C>0$ and $\delta>0$ such that $$\bigg|\int_{\Lambda}E\circ g_{t}. Fd\mu_{\Phi}-\int_{\Lambda}Ed\mu_{\Phi}\int_{\Lambda}Fd\mu_{\Phi}\bigg|\le C||E||_{C^{1}}||F||_{C^{1}}e^{-\delta\sqrt{t}},$$
for any $E,F\in C^{1}(M)$ and any $t>0$.
\end{Cor}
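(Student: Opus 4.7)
The plan is to reduce this statement to Corollary \ref{Stretch exponential mixing for hyperper skew product} via a symbolic model for $g_t|_\Lambda$. First I would invoke the Bowen--Ratner construction of a finite family of Markov cross-sections $\{\Sigma_i\}_{i=1}^N$ transverse to the flow in a neighbourhood of $\Lambda$, with first-return map $R$ and return-time function $r:\Sigma\to\mathbb{R}$ on $\Sigma:=\bigsqcup_i\Sigma_i$. This realises $g_t|_\Lambda$ as a suspension flow over the locally maximal hyperbolic set $(\Sigma,R)$, and the SRB measure $\mu_\Phi$ on $\Lambda$ corresponds to the suspension of the SRB measure of $R$.

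Next, using that $\dim E^u=1$, I would quotient each $\Sigma_i$ along its local stable leaves to obtain an open interval $I_i$, set $I:=\bigsqcup_i I_i$, and check that $R$ descends to a $C^{1+\alpha}$ topologically mixing expanding Markov interval map $T:I\to I$ in the sense of Definition 2.1, with $\alpha$ determined by the $C^2$ flow and the $C^{1+\alpha}$ regularity of the one-dimensional strong unstable foliation. Under the Lipschitz stable foliation hypothesis, the stable holonomies are Lipschitz, so $R$ presents itself as a $C^{1+\alpha}$ hyperbolic skew-product $\widehat{T}:\widehat{I}\to\widehat{I}$ over $T$ with Lipschitz skew map; the return-time function $r$ descends to a Lipschitz function on $I$; and the suspension of $\widehat{T}$ under $r$ is bi-Lipschitz conjugate to $g_t|_\Lambda$, carrying $m\times\mathrm{Leb}$ to $\mu_\Phi$.

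The remaining condition to verify is that $r:I\to\mathbb{R}$ is not cohomologous to a locally constant function within $C^{Lip}(I,\mathbb{R})$. This is precisely where joint non-integrability of $E^s\oplus E^u$ is used. Via a Liv\v{s}ic-type argument, any coboundary relation $r=\chi+v\circ T-v$ with $\chi$ locally constant on two-cylinders and $v$ Lipschitz translates into vanishing of the temporal distance function on a non-trivial local rectangle of $\Lambda$, which is known to be equivalent to local integrability of $E^s\oplus E^u$, contradicting the hypothesis. With both hypotheses of Corollary \ref{Stretch exponential mixing for hyperper skew product} verified, it yields stretched-exponential decay of correlations for observables in $C^{\alpha,1}(\widehat{I}_r)$. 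An observable $E\in C^1(M)$ lifts via the coding to an element of $C^{\alpha,1}(\widehat{I}_r)$ with $\|\cdot\|_{\alpha,1}$-norm bounded by a multiple of $\|E\|_{C^1}$, since the coding map is Lipschitz and the $\partial_u$-direction on $\widehat{I}_r$ corresponds to the flow direction on $\Lambda$; this transfers the bound to the form stated.

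The principal obstacle is the cohomology-versus-non-integrability step. It requires a careful analysis expressing differences of Birkhoff sums of $r$ along pairs of periodic orbits in a common heteroclinic rectangle as values of the temporal distance function, and then showing that a locally constant $\chi$ cannot absorb all such differences unless the temporal distance function vanishes identically on that rectangle. The Lipschitz regularity of $E^s$ plays a double role here: it supplies the Lipschitz roof needed to invoke Theorem \ref{Stretch exponential mixing for semiflow}, and it keeps the putative coboundary $v$ in the Lipschitz class, so that the cohomology hypothesis of Corollary \ref{Stretch exponential mixing for hyperper skew product} is tested in the correct function space.
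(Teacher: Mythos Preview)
Your proposal is correct and follows essentially the same route as the paper: reduce to Corollary~\ref{Stretch exponential mixing for hyperper skew product} via a Bowen--Ratner Markov section, use $\dim E^{u}=1$ and the attractor property to realise the Poincar\'e map as a $C^{1+\alpha}$ hyperbolic skew-product over an expanding Markov interval map, and use Lipschitz $E^{s}$ to make the cross-sections---hence the skew map and roof---Lipschitz. The paper's own proof is terser and defers the details (including the verification that joint non-integrability forces $r$ to be non-cohomologous to a locally constant function) to \cite{Dal21,Dal24}, whereas you spell out that step via the temporal distance function; your emphasis on this as the ``principal obstacle'' is well placed, since it is exactly what the paper leaves to the references.
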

\begin{proof}
It is also routine (see \cite{Dal21,Dal24}) to reduce the mixing rate of \(g_{t}:\Lambda\to \Lambda\) with respect to \(\mu_{\Phi}\) to that of a hyperbolic skew-product semiflow \(\psi_{t}:\widehat{I}_{r}\to \widehat{I}_{r}\) with respect to \(m \times \text{Leb}\). By \cite{Bow73, Rat73}, we can choose a Markov section $\{R_{i}\}_{i=1}^{N}$ of $g_{t}:\Lambda\to\Lambda$ of arbitrary small size, where these sets $R_{i}$ are suitable local cross-sections of the flow $g_{t}$. We can then regard $g_{t}:\Lambda\to\Lambda$ as the suspension flow $\psi_{t}$ of the first return map $P:R\to R$ on the section $R:=\cup_{i}R_{i}$ with the roof function being the first return time $r: R\to\mathbb{R}^{+}$. The assumption of $\Lambda$ is an attractor and $\dim(E^{u})=1$ ensures that the first return map $P:R\to R$ is a $C^{1+\alpha}$ hyperbolic skew-product. Since $E^{s}$ is Lipschitz continuous, we can choose these local cross-sections $R_{i}$ to be Lipschitz as well. Consequently, the skew map is Lipschitz continuous. Thus, the suspension flow $\psi_{t}$ is a hyperbolic skew-product semiflow. For more details, one can refer to \cite{Dal21}.
\end{proof}

In the case of Anosov flows, it is proven in \cite{Pla72} that joint non-integrability implies topological mixing, and they are equivalent if the flow is codimension-one. Thus, for topologically mixing codimension-one Anosov flows, our Corollary \ref{Stretched exponential mixing for hyperbolic attractor} can be stated more succinctly.

\begin{Cor}\label{Stretched exponential mixing for Anosov flow}
	Let $g_{t}:M\to M$ be a $C^{2}$ topologically mixing codimension-one Anosov flow. If $E^{s}$ is Lipschitz continuous, then $g_{t}$ enjoys the same form of stretched-exponential mixing with respect to the SRB measure as shown in Corollary \ref{Stretched exponential mixing for hyperbolic attractor}.
\end{Cor}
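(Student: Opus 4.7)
The plan is to derive this corollary directly from Corollary \ref{Stretched exponential mixing for hyperbolic attractor} together with Plante's equivalence theorem \cite{Pla72}. First, I would observe that an Anosov flow $g_{t}:M\to M$ is trivially a hyperbolic attractor in the sense of Subsection \ref{subsec 2.3}: take $\Lambda=M$ and the open neighbourhood $U=M$, so that both invariance and the attracting condition $\Lambda=\bigcup_{t>0}g_{t}U$ hold automatically. The splitting $TM=E^{c}\oplus E^{s}\oplus E^{u}$ and the $C^{2}$ regularity are built into the hypotheses, and density of closed orbits holds by a theorem of Anosov. Consequently, all structural axioms $(1)$--$(3)$ required by the preceding corollary are met.

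Next I would verify the joint non-integrability hypothesis. By Plante's theorem \cite{Pla72}, for a codimension-one Anosov flow, topological mixing of $g_{t}$ is equivalent to the non-integrability of the subbundle $E^{s}\oplus E^{u}$. Since $g_{t}$ is assumed to be topologically mixing and codimension-one, it is jointly non-integrable. If the original flow has $\dim(E^{u})=1$, we may directly invoke Corollary \ref{Stretched exponential mixing for hyperbolic attractor} with the given Lipschitz continuity of $E^{s}$; if instead $\dim(E^{s})=1$, we pass to the reversed flow $g_{-t}$ as suggested in the paragraph preceding that corollary, noting that the one-dimensional strong stable foliation of a $C^{2}$ codimension-one Anosov flow is known to be at least Lipschitz, so that the Lipschitz hypothesis on the stable bundle of the reversed flow is satisfied as well.

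Finally, applying Corollary \ref{Stretched exponential mixing for hyperbolic attractor} to $g_{t}:\Lambda\to\Lambda=M$ equipped with its SRB measure $\mu_{\Phi}$ yields the stretched-exponential bound
\[
\Bigl|\int_{M}E\circ g_{t}\cdot F\,d\mu_{\Phi}-\int_{M}E\,d\mu_{\Phi}\int_{M}F\,d\mu_{\Phi}\Bigr|\le C\|E\|_{C^{1}}\|F\|_{C^{1}}e^{-\delta\sqrt{t}}
\]
for every $E,F\in C^{1}(M)$, which is exactly the assertion. There is no substantial obstacle here: the corollary is a repackaging of the previous one under the simpler hypothesis set available for Anosov flows, and the only nontrivial input is the Plante equivalence, which replaces the joint non-integrability assumption by the more natural topological mixing assumption.
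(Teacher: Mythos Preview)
Your approach is exactly the paper's: the corollary is stated without a formal proof block, the preceding sentence simply invoking Plante's theorem \cite{Pla72} for the equivalence of topological mixing and joint non-integrability in the codimension-one case, so that Corollary~\ref{Stretched exponential mixing for hyperbolic attractor} applies directly with $\Lambda=M$.

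One small wrinkle in your case analysis: when $\dim(E^{s})=1$ and you pass to the reversed flow, the stable bundle of $g_{-t}$ is the \emph{original} $E^{u}$, not the original one-dimensional $E^{s}$. Thus your appeal to Lipschitz regularity of ``the one-dimensional strong stable foliation'' addresses the wrong bundle; what would actually be needed is Lipschitz continuity of the original $E^{u}$, which is neither assumed nor automatic. Under the paper's standing convention that a codimension-one attractor has $\dim(E^{u})=1$ (with the reversed-flow alternative absorbed into the hypotheses rather than the proof), this case simply does not arise: the assumption ``$E^{s}$ is Lipschitz'' is already stated relative to the orientation with one-dimensional unstable bundle, so no extra regularity argument is required and your main line of reasoning goes through unchanged.
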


\section{Laplace transform and a Dolgopyat type estimate}\label{sec 3}

The strategy for proving Theorem \ref{Stretch exponential mixing for semiflow} is now classical. It involves investigating the Laplace transform of the correlation function. By demonstrating that the Laplace transform can be analytically extended to a specific region in the left half-plane, one can derive a decay rate for the correlation function, which depends on the shape of this region. The Laplace transform can be expressed as the resolvent of a complex transfer operator, and the analytical extension can be established through a certain norm estimate of the resolvent, as stated in Proposition \ref{Dolgopyat type estimate}. This section is dedicated to fulfilling this objective.

Recalling the correlation function for $E, F\in C^{\alpha,1}(I_{r})$ as
$$
\rho_{E,F}(t)=\int_{I_{r}} E\circ\phi_{t} Fd\mu_{\varphi}\times\text{Leb}-\int_{I_{r}}Ed\mu_{\varphi}\times\text{Leb}\int_{I_{r}} Fd\mu_{\varphi}\times\text{Leb}.
$$
As usual, by replacing $E$ with $E-\int_{I_{r}}Ed\mu_{\varphi}\times\text{Leb}$, we can assume $\int_{I_{r}}Ed\mu_{\varphi}\times\text{Leb}=\int_{I_{r}}Fd\mu_{\varphi}\times\text{Leb}=0$. It then more convenient to decompose $\rho_{E,F}(t)=\Delta_{E,F}(t)+\chi_{E,F}(t)$ where
$$\Delta_{E,F}(t):=\int_{I}\int_{0}^{\max\{0,r-t\}}E\circ\phi_{t}Fdud\mu_{\varphi}\quad\text{and}\quad\chi_{E,F}(t):=\int_{I}\int_{\max\{0,r-t\}}^{r}E\circ\phi_{t}Fdud\mu_{\varphi}.$$
The function $\Delta_{E,F}$ satisfies $\Delta_{E,F}(t)=0$ when $t\ge |r|_{\infty}$. Thus, $\rho_{E,F}$ and $\chi_{E,F}$ have the same asymptotic form. Consider the Laplace transform $\widehat{\chi}_{E,F}$ of $\chi_{E,F}$ defined as $\widehat{\chi}_{E,F}(s)=\int_{0}^{\infty}e^{-st}\chi_{E,F}(t)dt$. Since $\chi_{E,F}$ is bounded by $|E|_{\infty}|F|_{\infty}$, we have $\widehat{\chi}_{E,F}$ is analytic in the region $\{s=a+ib:a>0\}$. Denote by
$$e_{s}(x):=\int_{0}^{r(x)}e^{-su}E(x,u)du\quad\text{and}\quad f_{s}(x):=\int_{0}^{r(x)}e^{-su}F(x,u)du.$$
Obviously, $e_{s}$ and $f_{s}$ belong to $C^{\alpha}(I)$. Moreover, we can more precisely bound them as follows.

\begin{Lemma}\label{Lemma 3.1}
There exists $C_{1}>0$ such that for any $s=a+ib$ with $s\not=0$ and any $E,F\in C^{\alpha,1}(I_{r})$, we have
$$
\left\{\begin{aligned}
&|e_{s}|_{\infty}\le C_{1}e^{|a||r|_{\infty}}|s|^{-1}||E||_{\alpha,1}\\
&|f_{s}|_{\infty}\le C_{1}e^{|a||r|_{\infty}}|s|^{-1}||F||_{\alpha,1}
\end{aligned}
\right.\quad\text{and}\quad\left\{\begin{aligned}
	&|e_{s}|_{\alpha}\le C_{1}e^{|a||r|_{\infty}}||E||_{\alpha,1}\\
	&|f_{s}|_{\alpha}\le C_{1}e^{|a||r|_{\infty}}||F||_{\alpha,1}.
\end{aligned}
\right.
$$
\end{Lemma}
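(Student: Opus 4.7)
The plan is to prove the sup-norm and Hölder-seminorm bounds separately, treating $e_s$ and $f_s$ identically (so I will describe the argument for $e_s$ only). Throughout, $|e^{-su}| = e^{-au} \leq e^{|a||r|_\infty}$ since $u \in [0, r(x)] \subset [0, |r|_\infty]$.

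For the sup-norm estimate, the factor $|s|^{-1}$ must come from integration by parts in $u$ (it cannot come from just bounding the exponential, as the answer would then depend only on $|r|_\infty$). Writing $U = E(x,u)$ and $dV = e^{-su}\, du$, so $V = -s^{-1} e^{-su}$, I would get
\[
e_s(x) = -\frac{1}{s} e^{-s r(x)} E(x, r(x)) + \frac{1}{s} E(x, 0) + \frac{1}{s} \int_0^{r(x)} e^{-su} \partial_u E(x,u)\, du.
\]
Each of the two boundary terms is bounded by $|s|^{-1} e^{|a||r|_\infty} |E|_\infty$, and the remaining integral is bounded by $|s|^{-1} |r|_\infty e^{|a||r|_\infty} |\partial_u E|_\infty$. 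This is where the hypothesis that $E \in C^{\alpha,1}(I_r)$ (so $\partial_u E$ is bounded) enters crucially. Summing gives the claimed $|e_s|_\infty$ bound with $C_1$ depending on $|r|_\infty$.

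For the Hölder seminorm, I would assume without loss of generality that $r(x) \leq r(y)$ and split
\[
e_s(y) - e_s(x) = \int_0^{r(x)} e^{-su}\bigl(E(y,u) - E(x,u)\bigr) du + \int_{r(x)}^{r(y)} e^{-su} E(y,u)\, du.
\]
On the first integral, the hypothesis $u \in [0, \min\{r(x), r(y)\}] = [0, r(x)]$ of Definition \ref{Def 2.2} is met exactly, so $|E(y,u) - E(x,u)| \leq |E|_\alpha |x-y|^\alpha$, and the integral is bounded by $|r|_\infty e^{|a||r|_\infty} |E|_\alpha |x-y|^\alpha$. For the second integral, the Lipschitz continuity of $r$ gives $|r(y) - r(x)| \leq |r|_{Lip} |x-y|$, so the integral is bounded by $|r|_{Lip} e^{|a||r|_\infty} |E|_\infty |x-y|$. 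Since $I$ has finite diameter $D$, the trivial estimate $|x-y| \leq D^{1-\alpha} |x-y|^\alpha$ absorbs this into the Hölder scale.

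There is no serious obstacle here; the estimate is standard and the only subtlety is that Definition \ref{Def 2.2} imposes the restriction $u \in [0, \min\{r(x), r(y)\}]$, which dictates the split of the integral and the need to control the "extra" slab $[r(x), r(y)]$ separately by the Lipschitz norm of $r$ (this is where the standing hypothesis $r \in C^{Lip}(I, \mathbb{R})$ is used). Collecting terms gives both estimates with a uniform $C_1$ depending only on $|r|_\infty$, $|r|_{Lip}$, and $D$.
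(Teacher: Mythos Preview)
Your proof is correct and follows essentially the same approach as the paper: integration by parts for the sup-norm bound, and the same split of the integral (over $[0,r(x)]$ and $[r(x),r(y)]$) for the H\"older seminorm. You in fact supply more detail than the paper does on the second step, explicitly invoking the Lipschitz continuity of $r$ and the conversion $|x-y|\le D^{1-\alpha}|x-y|^{\alpha}$ to pass from the slab estimate to the $\alpha$-H\"older bound, whereas the paper simply asserts that the displayed inequality ``implies the second part.''
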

\begin{proof}
By integration by parts, we can write
$$e_{s}(x)=\frac{1}{-s}e^{-su}E(x,u)\bigg|_{0}^{r(x)}+\int_{0}^{r(x)}\frac{1}{s}e^{-su}\partial_{u}E(x,u)du,$$
which easily implies the first part. On the other hand, for any $x,y\in I$, without loss of generality we assume $r(x)\le r(y)$. Then, applying a basic triangle inequality yields 
$$|e_{s}(x)-e_{s}(y)|\le\int_{0}^{r(x)}e^{-su}|E(x,u)-E(y,u)|du+\int_{r(x)}^{r(y)}e^{-su}|E(y,u)|du,$$
which implies the second part.
\end{proof}

The following result is essentially due to Pollicott \cite{Pol85}.

\begin{Lemma}\label{Lemma 3.2}
For any $s=a+ib$ with $a>0$, we have $\widehat{\chi}_{E,F}(s)=\sum_{n=1}^{\infty}\int_{I}e^{-sr_{n}}e_{s}\circ T^{n} f_{-s}d\mu_{\varphi}$.
\end{Lemma}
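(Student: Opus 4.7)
The plan is to evaluate the Laplace transform directly by unfolding the suspension semiflow, swapping the order of integration, and performing a change of variables on each return piece.

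First, I would substitute the definition of $\chi_{E,F}(t)$ into $\widehat{\chi}_{E,F}(s) = \int_{0}^{\infty} e^{-st} \chi_{E,F}(t)\,dt$ and, since $a>0$ ensures absolute integrability (using $|E|_{\infty}, |F|_{\infty} < \infty$ and that the integration region in $u$ has length at most $\min\{t,|r|_{\infty}\}$), apply Fubini to rewrite
\begin{equation*}
\widehat{\chi}_{E,F}(s) \;=\; \int_{I} \int_{0}^{r(x)} F(x,u) \int_{r(x)-u}^{\infty} e^{-st}\, E(\phi_{t}(x,u))\,dt\,du\,d\mu_{\varphi}(x),
\end{equation*}
where the condition $u \ge \max\{0, r(x)-t\}$ is rewritten as $t \ge r(x)-u$.

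Next, for each fixed $(x,u)$ with $t \ge r(x)-u$, the semiflow $\phi_{t}(x,u) = (T^{n}x, u+t-r_{n}(x))$ precisely on the interval $t \in [r_{n}(x)-u,\, r_{n+1}(x)-u)$ for each $n \ge 1$, where $r_{n}(x) := \sum_{k=0}^{n-1} r(T^{k}x)$. Splitting the $t$-integral into this disjoint union and performing the change of variables $v = u+t-r_{n}(x)$ on each piece, I obtain
\begin{equation*}
\int_{r(x)-u}^{\infty} e^{-st} E(\phi_{t}(x,u))\,dt \;=\; \sum_{n=1}^{\infty} e^{su}\, e^{-s r_{n}(x)} \int_{0}^{r(T^{n}x)} e^{-sv} E(T^{n}x, v)\,dv \;=\; \sum_{n=1}^{\infty} e^{su}\, e^{-s r_{n}(x)}\, e_{s}(T^{n}x).
\end{equation*}
The interchange of sum and integral here is justified because $a>0$ combined with the uniform lower bound $\inf r > 0$ forces $|e^{-sr_{n}(x)}| \le e^{-a n \inf r}$, so the series converges absolutely and uniformly in $(x,u)$.

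Plugging this back in and recognizing that $\int_{0}^{r(x)} e^{su} F(x,u)\,du = f_{-s}(x)$ by the definition preceding Lemma~\ref{Lemma 3.1}, I arrive at
\begin{equation*}
\widehat{\chi}_{E,F}(s) \;=\; \sum_{n=1}^{\infty} \int_{I} e^{-s r_{n}(x)}\, e_{s}(T^{n}x)\, f_{-s}(x)\,d\mu_{\varphi}(x),
\end{equation*}
which is the claimed identity. The only real subtlety in the proof is bookkeeping the justifications for Fubini and for exchanging sum and integral; both rely only on $a > 0$, $\inf r > 0$, and $|E|_{\infty},|F|_{\infty}<\infty$, so no delicate estimate is required at this stage. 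The cosmetic issue that the identifications $(x,r(x))\sim(Tx,0)$ occur on a set of $dt$-measure zero is harmless.
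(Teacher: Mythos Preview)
Your proof is correct and follows essentially the same approach as the paper: unfold the suspension, apply Fubini, split the time integral according to the return intervals, and change variables to recognize $e_{s}\circ T^{n}$ and $f_{-s}$. The only difference is cosmetic---the paper substitutes $t'=u+t$ before splitting into $[r_{n}(x),r_{n+1}(x)]$, whereas you split into $[r_{n}(x)-u,r_{n+1}(x)-u)$ and then substitute $v=u+t-r_{n}(x)$---and you are slightly more explicit about the Fubini and sum-integral interchange justifications, which the paper leaves implicit.
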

\begin{proof}
By definition, we compute
\begin{equation*}
\begin{aligned}
	&\widehat{\chi}_{E,F}(s)\\
=&\int_{0}^{\infty}\int_{I}\int_{\max\{0,r(x)-t\}}^{r(x)}e^{-st}E(x,u+t)F(x,u)dud\mu_{\varphi}(x)dt\\
	=&\int_{I}\int_{0}^{r(x)}\int_{t\ge r(x)-u}e^{-st}E(x,u+t)F(x,u)dtdud\mu_{\varphi}(x)\\
			=&\int_{I}\int_{t^{\prime}\ge r(x)}e^{-st^{\prime}}E(x,t^{\prime})dt^{\prime}\int_{0}^{r(x)}e^{su}F(x,u)dud\mu_{\varphi}(x)\\
			=&\int_{I}\sum_{n=1}^{\infty}\bigg[\int_{r_{n}(x)}^{r_{n+1}(x)}e^{-st^{\prime}}E(x,t^{\prime})dt^{\prime}\bigg]\bigg[\int_{0}^{r(x)}e^{su}F(x,u)du\bigg]d\mu_{\varphi}(x)\\
			=&\sum_{n=1}^{\infty}\int_{I}e^{-sr_{n}}e_{s}\circ T^{n}f_{-s}d\mu_{\varphi},
		\end{aligned}
	\end{equation*}
which completes the proof.
\end{proof}

It is well-known \cite{Bal00} that the normalized Lebesgue measure on $I$ is the eigenmeasure with eigenvalue $1$ of the following transfer operator $\mathcal{L}_{\varphi}$,
$$\mathcal{L}_{\varphi}:C^{\alpha}(I)\to C^{\alpha}(I),\quad \mathcal{L}_{\varphi}h(x)=\sum_{y\in\mathcal{H}_{1}}e^{\varphi\circ y(x)}h\circ y(x),$$
where $\mathcal{H}_{1}$ represents the set of inverse branches of $T$. We also have $\mathcal{L}_{\varphi}h_{\varphi}=h_{\varphi}$ for some $h_{\varphi}\in C^{\alpha}(I)$ with $\inf_{x\in I}h(x)>0$. Furthermore, the SRB measure $\mu_{\varphi}$ is equal to $h_{\varphi}dx$ where $dx$ present the normalized Lebesgue measure on $I$. After adding a coboundary to $\varphi$, we can assume $\varphi$ is normalized, i.e., $\mathcal{L}_{\varphi}1=1$ and $\mathcal{L}_{\varphi}^{*}\mu_{\varphi}=\mu_{\varphi}$. In particular, by Lemma \ref{Lemma 3.2}, we have the following.

\begin{Corollary}\label{Cor 3.3}
For any $s=a+ib$ with $a>0$, we can further write
$\widehat{\chi}_{E,F}(s)=\int_{I}e_{s}\frac{\mathcal{L}_{s}}{1-\mathcal{L}_{s}}f_{-s}d\mu_{\varphi}$, where $\mathcal{L}_{s}$ denotes the transfer operator of $\varphi-sr$.
\end{Corollary}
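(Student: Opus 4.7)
The plan is to convert each term in the series from Lemma \ref{Lemma 3.2} into an integral against $e_{s}$ via duality, and then sum the resulting geometric series.

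First I would record the precise duality identity for the weighted transfer operator. Since $\varphi$ has been normalized so that $\mathcal{L}_{\varphi}^{*}\mu_{\varphi}=\mu_{\varphi}$, and since $\mathcal{L}_{s}g=\mathcal{L}_{\varphi}(e^{-sr}g)$, one obtains $\int (\mathcal{L}_{s}g)\,h\,d\mu_{\varphi}=\int e^{-sr}g\cdot(h\circ T)\,d\mu_{\varphi}$ for all $g,h\in C^{\alpha}(I)$. Iterating this identity $n$ times, and recalling $r_{n}=\sum_{k=0}^{n-1}r\circ T^{k}$, yields
$$
\int_{I}(\mathcal{L}_{s}^{n}g)\,h\,d\mu_{\varphi}=\int_{I}e^{-sr_{n}}\,g\cdot(h\circ T^{n})\,d\mu_{\varphi}.
$$
Applying this with $g=f_{-s}$ and $h=e_{s}$ transforms the $n$th summand in Lemma \ref{Lemma 3.2} into $\int_{I}e_{s}\,\mathcal{L}_{s}^{n}f_{-s}\,d\mu_{\varphi}$.

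Next I would verify that the Neumann series $\sum_{n\ge 1}\mathcal{L}_{s}^{n}$ converges in operator norm on $C^{\alpha}(I)$, so that the exchange of sum and integral is legitimate and $\sum_{n\ge 1}\mathcal{L}_{s}^{n}=\mathcal{L}_{s}(1-\mathcal{L}_{s})^{-1}$. The pointwise estimate $|\mathcal{L}_{s}^{n}h(x)|\le \mathcal{L}_{\varphi}^{n}(e^{-ar_{n}}|h|)(x)\le e^{-an\inf r}|h|_{\infty}$ combined with the normalization $\mathcal{L}_{\varphi}1=1$ and $\inf r>0$ shows that the spectral radius of $\mathcal{L}_{s}$ on $C^{0}(I)$ is strictly less than $1$ whenever $a>0$; a standard Lasota--Yorke argument then transfers this to $C^{\alpha}(I)$. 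Combined with the uniform bounds on $e_{s}$ and $f_{-s}$ from Lemma \ref{Lemma 3.1}, this justifies summing and interchanging, giving
$$
\widehat{\chi}_{E,F}(s)=\sum_{n=1}^{\infty}\int_{I}e_{s}\,\mathcal{L}_{s}^{n}f_{-s}\,d\mu_{\varphi}=\int_{I}e_{s}\,\frac{\mathcal{L}_{s}}{1-\mathcal{L}_{s}}f_{-s}\,d\mu_{\varphi}.
$$

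There is no real obstacle here; the only subtlety is making sure the spectral radius bound on $\mathcal{L}_{s}$ is uniform enough to validate the interchange of sum and integral, which follows directly from the normalization of $\varphi$ and the strict positivity of $r$.
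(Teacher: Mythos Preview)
Your proposal is correct and follows exactly the approach implicit in the paper: the paper states Corollary~\ref{Cor 3.3} as an immediate consequence of Lemma~\ref{Lemma 3.2} and the normalization $\mathcal{L}_{\varphi}^{*}\mu_{\varphi}=\mu_{\varphi}$, without writing out the duality computation or the Neumann-series justification, and you have simply supplied those routine details. There is nothing to add.
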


We have the following estimate on $\mathcal{L}_{s}$ and its resolvent $(1-\mathcal{L}_{s})^{-1}$.

\begin{Proposition}\label{Prop 3.4}
There exist $C_{2}>0$, $C_{3}>0$ and $\delta_{1}>0$ such that for any $s=a+ib$ with $|a|\le\frac{\delta_{1}}{\log|b|}$ and $|b|\ge3$ and any $h\in C^{\alpha}(I)$, we have $$||\mathcal{L}_{s}h||_{b}\le C_{2}||h||_{b}\quad\text{and}\quad ||(1-\mathcal{L}_{s})^{-1}h||_{b}\le C_{2}\log|b|||h||_{b},$$
where $||h||_{b}=\max\{|h|_{\infty},|h|_{\alpha}/C_{3}|b|^{\alpha}\}$.
\end{Proposition}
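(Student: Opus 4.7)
The proof of Proposition \ref{Prop 3.4} splits into a Lasota--Yorke type continuity estimate for $\mathcal{L}_s$ and a resolvent estimate for $(1-\mathcal{L}_s)^{-1}$. The latter will rely on the Dolgopyat-type contraction estimate for $\mathcal{L}_{ib}$ (Proposition \ref{Dolgopyat type estimate}) combined with a perturbation argument to pass from $s=ib$ to $s=a+ib$.

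For the first bound I would proceed as follows. The supremum-norm part is immediate from the normalization of $\varphi$:
\begin{equation*}
|\mathcal{L}_s h|_\infty \le e^{|a||r|_\infty}|h|_\infty,
\end{equation*}
which is uniformly bounded in the region $|a|\le\delta_1/\log|b|$, $|b|\ge 3$. For the H\"older seminorm, the key interpolation is
\begin{equation*}
|e^{-ibr(y_1)}-e^{-ibr(y_2)}|\le \min\{2,|b||r|_{Lip}|y_1-y_2|\}\le 2^{1-\alpha}|b|^\alpha|r|_{Lip}^\alpha|y_1-y_2|^\alpha.
\end{equation*}
Summing over inverse branches in $\mathcal{H}_n$ and using the $\lambda^n$-contraction of $T^{-n}$ together with the $C^\alpha$-regularity of $\varphi$, one obtains a Lasota--Yorke inequality
\begin{equation*}
|\mathcal{L}_s^n h|_\alpha \le K_0|b|^\alpha|h|_\infty + K_0\lambda^{n\alpha}|h|_\alpha.
\end{equation*}
Choosing $C_3$ large and $n_0$ large so that $K_0/C_3+K_0\lambda^{n_0\alpha}<1$ yields $\|\mathcal{L}_s^{n_0}h\|_b\le\|h\|_b$, and hence a uniform bound $\|\mathcal{L}_s^n\|_b\le C_2$ valid for all $n\ge 0$ and all $s$ in the prescribed region.

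For the resolvent bound, Proposition \ref{Dolgopyat type estimate} will supply $\rho_0\in(0,1)$ and an integer $N(b)$ of order $\log|b|$ with $\|\mathcal{L}_{ib}^{N(b)}h\|_b\le\rho_0\|h\|_b$. To transfer this to $\mathcal{L}_s$ I would telescope
\begin{equation*}
\mathcal{L}_s^{N(b)}-\mathcal{L}_{ib}^{N(b)}=\sum_{k=0}^{N(b)-1}\mathcal{L}_s^{k}(\mathcal{L}_s-\mathcal{L}_{ib})\mathcal{L}_{ib}^{N(b)-1-k},
\end{equation*}
and use $|e^{-ar(y)}-1|\le C|a|$ together with a three-factor product-rule decomposition for H\"older seminorms to obtain $\|\mathcal{L}_s-\mathcal{L}_{ib}\|_b\le C|a|$. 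Combined with the uniform bound from the previous paragraph this gives $\|\mathcal{L}_s^{N(b)}-\mathcal{L}_{ib}^{N(b)}\|_b\le CN(b)|a|\le C'\delta_1$, and choosing $\delta_1$ small enough produces $\|\mathcal{L}_s^{N(b)}\|_b\le\rho_1<1$. The resolvent decomposition
\begin{equation*}
(1-\mathcal{L}_s)^{-1}=\Big(\sum_{k=0}^{N(b)-1}\mathcal{L}_s^{k}\Big)\sum_{j=0}^{\infty}\mathcal{L}_s^{jN(b)}
\end{equation*}
then yields $\|(1-\mathcal{L}_s)^{-1}\|_b\le N(b)\cdot C_2\cdot(1-\rho_1)^{-1}\le C\log|b|$, as required.

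The genuine obstacle is upstream, in Proposition \ref{Dolgopyat type estimate} and its supporting cancellation lemma in Section \ref{sec 5}. The delicate point specific to Proposition \ref{Prop 3.4} is the matching between the perturbation range $|a|\le\delta_1/\log|b|$ and the iterate count $N(b)$ of order $\log|b|$: the product $N(b)|a|$ must remain bounded, and this single constraint dictates both the width of the analyticity region for $\widehat{\chi}_{E,F}$ and, via the contour-shift argument at the end of Section \ref{sec 3}, the final stretched-exponential rate $e^{-\delta\sqrt{t}}$.
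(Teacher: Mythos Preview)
Your proposal is correct and covers all the essential ingredients: the Lasota--Yorke inequality, the perturbation bound $\|\mathcal{L}_s-\mathcal{L}_{ib}\|_b\le C|a|$, and the use of Proposition \ref{Dolgopyat type estimate} together with the constraint $|a|\log|b|\le\delta_1$. The paper organizes the perturbation step differently: rather than telescoping $\mathcal{L}_s^{N(b)}-\mathcal{L}_{ib}^{N(b)}$, it first sums the geometric series for $\mathcal{L}_{ib}$ directly (using Proposition \ref{Dolgopyat type estimate} and Corollary \ref{Cor 4.2}) to obtain $\|(1-\mathcal{L}_{ib})^{-1}\|_b\le C\log|b|$, and then applies the resolvent identity
\[
(1-\mathcal{L}_s)^{-1}=\Big(1-(\mathcal{L}_s-\mathcal{L}_{ib})(1-\mathcal{L}_{ib})^{-1}\Big)^{-1}(1-\mathcal{L}_{ib})^{-1},
\]
using $\|(\mathcal{L}_s-\mathcal{L}_{ib})(1-\mathcal{L}_{ib})^{-1}\|_b\le C|a|\log|b|\le 1/2$ to expand the first factor as a Neumann series. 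Both routes yield the same bound; the paper's version avoids iterating $\mathcal{L}_s$ beyond a single step.

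One small imprecision in your write-up: the claim $\|\mathcal{L}_s^{n_0}h\|_b\le\|h\|_b$ is not quite correct for $a\neq 0$, since the sup-norm part only satisfies $|\mathcal{L}_s^{n_0}h|_\infty\le e^{n_0|a||r|_\infty}|h|_\infty$. Consequently the ``uniform for all $n\ge 0$'' bound does not follow directly. What you actually get is $\|\mathcal{L}_s^k\|_b\le C$ for $k\le N(b)$ (because $N(b)|a|$ stays bounded), and that is exactly what the telescoping requires; the uniform-in-$n$ bound then follows a posteriori once $\|\mathcal{L}_s^{N(b)}\|_b\le\rho_1<1$ is in hand. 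The paper's resolvent-identity route sidesteps this bookkeeping entirely, which is its main practical advantage.
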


The above proposition will be proved in the next section. Now, we are at last in a position to prove Theorem \ref{Stretch exponential mixing for semiflow}.

\begin{proof}[\textbf{Proof of Theorem \ref{Stretch exponential mixing for semiflow} assuming Proposition \ref{Prop 3.4}}]
Since $\rho_{E,F}(t)=\Delta_{E,F}(t)+\chi_{E,F}(t)$ with $\Delta_{E,F}(t)=0$ when $t\ge|r|_{\infty}$, we conclude that $\rho_{E,F}(t)$ and $\chi_{E,F}(t)$ have the same asymptotic form. Thus, it is sufficient to prove a decay rate of $\chi_{E,F}(t)$.

For any $E,F\in C^{\alpha,1}(I_{r})$, the analytic extension of $\widehat{\chi}_{E,F}$ is given by Corollary \ref{Cor 3.3}. Furthermore, by Lemma \ref{Lemma 3.1} and Proposition \ref{Prop 3.4}, we know that $\widehat{\chi}_{E,F}$ is analytic in the region of $s+a=ib$ with  $|a|\le\frac{\delta_{1}}{\log|b|}$ and $|b|\ge3$, and satisfies
\begin{equation}\label{3.1}
|\widehat{\chi}_{E,F}(s)|\le C_{4}\frac{\log|b|}{|b|^{1+\alpha}}||E||_{\alpha,1}||F||_{\alpha,1},
\end{equation}
for some uniform constant $C_{4}>0$. On the other hand, by an argument in \cite{Mel18}, the analyticity of $\widehat{\chi}_{E,F}$ in the region of $s=a+ib$ with  $|a|\le\frac{\delta_{1}}{\log3}$ and $|b|\le3$ can be derived from the weak-mixing property of $\phi_{t}$ or Proposition \ref{Prop 3.4} and it satisfies 
\begin{equation}\label{3.2}
|\widehat{\chi}_{E,F}(s)|\le C_{5}||E||_{\alpha,1}||F||_{\alpha,1},
\end{equation}
for some uniform constant $C_{5}>0$.
	
Now, together with \eqref{3.1} and \eqref{3.2}, we can apply the inverse Laplace formula to the curve $\Gamma=\{s=a+ib\ |\ a=-\min\{\frac{\delta_{1}}{\log|b|},\frac{\delta_{1}}{\log3}\},\ b\in\mathbb{R}\}$ to obtain that
\begin{equation}\label{3.3}
\begin{aligned}
&|\chi_{E,F}(t)|=\bigg|\int_{\Gamma}e^{st}\widehat{\chi}_{E,F}(s)ds\bigg|\\
\le&\bigg|\int_{|b|\ge3}e^{-\delta_{1}t/\log|b|}C_{4}\frac{\log|b|}{|b|^{1+\alpha}}||E||_{\alpha,1}||F||_{\alpha,1}db\bigg|+\bigg|\int_{|b|\le3}e^{-\delta_{1}t/\log3}C_{5}||E||_{\alpha,1}||F||_{\alpha,1}db\bigg|.
\end{aligned}
\end{equation}
On the one hand, we have the trivial bound
\begin{equation}\label{3.4}
\bigg|\int_{|b|\le3}e^{-\delta_{1}t/\log3}C_{5}||E||_{\alpha,1}||F||_{\alpha,1}db\bigg|\le 6C_{5}||E||_{\alpha,1}||F||_{\alpha,1}e^{-\delta_{1}t/\log3}.
\end{equation}
On the other hand, it is not difficult to show that there exist $C_{6}>0$ and $\delta_{2}>0$ such that for any $t>0$,
\begin{equation}\label{3.5}
\bigg|\int_{|b|\ge3}e^{-\delta_{1}t/\log|b|}\frac{\log|b|}{|b|^{1+\alpha}}db\bigg|\le C_{6}e^{-\delta_{2}\sqrt{t}}.
\end{equation}
Substituting \eqref{3.4} and \eqref{3.5} into \eqref{3.3}, we have 
\begin{equation}\label{3.6}
	|\chi_{E,F}(t)|\le C_{7}e^{-\delta_{2}\sqrt{t}}||E||_{\alpha,1}||F||_{\alpha,1},
	\end{equation}
for any $t\ge0$ and some uniform constant $C_{7}>0$. 

Finally, since $\rho_{E,F}(t)=\Delta_{E,F}(t)+\chi_{E,F}(t)$ with $\Delta_{E,F}(t)=0$ when $t\ge|r|_{\infty}$, we conclude that $\rho_{E,F}(t)$ has the same asymptotic form in \eqref{3.6} and thus completing the proof.
\end{proof}

\section{Proof of Proposition \ref{Prop 3.4}}\label{sec 4}

In this section, we prove Proposition \ref{Prop 3.4}. The key technical result is a Dolgopyat type estimate (Proposition \ref{Dolgopyat type estimate}) for the transfer operator \(\mathcal{L}_{ib}\). The main ingredient in the proof of Proposition \ref{Dolgopyat type estimate} is a cancellation lemma (Lemma \ref{Lemma 4.4}), which we address in the subsequent section. The estimate concerning \(\mathcal{L}_{s}\) and its resolvent \((1 - \mathcal{L}_{s})^{-1}\), as stated in Proposition \ref{Prop 3.4}, is derived from the Dolgopyat-type estimate for \(\mathcal{L}_{ib}\) in Proposition \ref{Dolgopyat type estimate} using a perturbation argument.

Since $T:I \to I$ is a $C^{1+\alpha}$ expanding Markov map, there exist $C_{8}>0$ and $\lambda\in(0,1)$ such that for any $n\in\mathbb{N}^{+}$, any $y\in\mathcal{H}_{n}$ and any $x,x^{\prime}\in I$,
\begin{equation}\label{4.1}
	|y(x)-y(x^{\prime})|\le C_{8}\lambda^{n}|x-x^{\prime}|,
\end{equation}
where $\mathcal{H}_{n}$ is the collection of inverse branches of $T^{n}$. Based on the above bound, it is straightforward to demonstrate
\begin{equation}\label{4.2}
	\sup_{n\in\mathbb{N}}\sup_{y\in\mathcal{H}_{n}}|r_{n}\circ y|_{Lip}<\infty\quad\text{and}\quad\sup_{n\in\mathbb{N}}\sup_{y\in\mathcal{H}_{n}}|\varphi_{n}\circ y|_{\alpha}<\infty.
\end{equation} 
These bounds can be used to prove the following Lasota-Yorke inequality.

\begin{Lemma}\label{Lemma 4.1}
	There exists $C_{9}>0$ such that for any $|b|\ge3$, any $h\in C^{\alpha}(I)$ and any $n\in\mathbb{N}^{+}$,
	$$|\mathcal{L}_{ib}^{n}h|_{\alpha}\le C_{9}|b|^{\alpha}|h|_{\infty}+C_{9}\lambda^{\alpha n}|h|_{\alpha}.$$
\end{Lemma}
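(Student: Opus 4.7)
The plan is to apply a direct Lasota--Yorke style calculation to
$$\mathcal{L}_{ib}^{n}h(x)=\sum_{y\in\mathcal{H}_{n}}e^{\varphi_{n}\circ y(x)}e^{-ibr_{n}\circ y(x)}h(y(x)).$$
For two points $x,x^{\prime}\in I$ lying in the same component of $I$, I would write $\mathcal{L}_{ib}^{n}h(x)-\mathcal{L}_{ib}^{n}h(x^{\prime})$ as a telescoping sum over each inverse branch $y\in\mathcal{H}_{n}$ and split the difference on each summand into three pieces: one coming from the variation of the weight $e^{\varphi_{n}\circ y}$, one from the variation of the oscillatory factor $e^{-ibr_{n}\circ y}$, and one from the variation of $h\circ y$.

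For the weight piece I would use bounded distortion, which follows from $\sup_{n,y}|\varphi_{n}\circ y|_{\alpha}<\infty$ in \eqref{4.2}, to get $|e^{\varphi_{n}\circ y(x)}-e^{\varphi_{n}\circ y(x^{\prime})}|\lesssim e^{\varphi_{n}\circ y(x^{\prime})}|x-x^{\prime}|^{\alpha}$; summed over $y$ and using the normalization $\mathcal{L}_{\varphi}^{n}1=1$, this contributes at most $C|h|_{\infty}|x-x^{\prime}|^{\alpha}$. For the $h$ piece I would use the contraction estimate \eqref{4.1}: $|h(y(x))-h(y(x^{\prime}))|\le |h|_{\alpha}C_{8}^{\alpha}\lambda^{\alpha n}|x-x^{\prime}|^{\alpha}$, giving a contribution of at most $C\lambda^{\alpha n}|h|_{\alpha}|x-x^{\prime}|^{\alpha}$ after summing against the weights.

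The key step, and the only one where $b$ enters, is the oscillatory piece. Here I would combine the trivial bound $|e^{-ibr_{n}\circ y(x)}-e^{-ibr_{n}\circ y(x^{\prime})}|\le 2$ with the Lipschitz bound $|e^{-ibr_{n}\circ y(x)}-e^{-ibr_{n}\circ y(x^{\prime})}|\le |b|\,|r_{n}\circ y|_{Lip}|x-x^{\prime}|$, which is uniform in $n$ and $y$ thanks to the Lipschitz part of \eqref{4.2}. Interpolating these two inequalities with exponents $1-\alpha$ and $\alpha$ respectively yields
$$|e^{-ibr_{n}\circ y(x)}-e^{-ibr_{n}\circ y(x^{\prime})}|\le 2^{1-\alpha}\bigl(|b|\,|r_{n}\circ y|_{Lip}\bigr)^{\alpha}|x-x^{\prime}|^{\alpha}\lesssim |b|^{\alpha}|x-x^{\prime}|^{\alpha}.$$
Summed against $e^{\varphi_{n}\circ y(x)}$ (with total mass $1$), this contributes at most $C|b|^{\alpha}|h|_{\infty}|x-x^{\prime}|^{\alpha}$.

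Combining the three contributions gives, since $|b|\ge 3$ makes the pure $|h|_{\infty}$ term absorbable into the $|b|^{\alpha}|h|_{\infty}$ term,
$$|\mathcal{L}_{ib}^{n}h(x)-\mathcal{L}_{ib}^{n}h(x^{\prime})|\le C_{9}\bigl(|b|^{\alpha}|h|_{\infty}+\lambda^{\alpha n}|h|_{\alpha}\bigr)|x-x^{\prime}|^{\alpha},$$
which is exactly the claimed inequality. The main obstacle in this argument is purely conceptual rather than technical: the interpolation trick is what allows the Lipschitz regularity of the roof function $r$ to yield only a $|b|^{\alpha}$ loss in the Hölder semi-norm, rather than the full $|b|$ loss one would get from a naive Lipschitz estimate; everything else is bookkeeping using bounded distortion and the normalization $\mathcal{L}_{\varphi}^{n}1=1$.
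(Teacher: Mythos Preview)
Your proposal is correct and follows essentially the same approach as the paper's proof: the same three-term telescoping split, the same use of bounded distortion \eqref{4.2} and contraction \eqref{4.1}, and the same normalization $\mathcal{L}_{\varphi}^{n}1=1$ to sum the weights. The only cosmetic difference is in the oscillatory piece: where you phrase the key step as an interpolation $X\le A^{1-\alpha}B^{\alpha}$ between the trivial bound $2$ and the Lipschitz bound, the paper writes it as $|e^{i\theta}-e^{i\theta'}|\le 2\min\{|\theta-\theta'|,1\}\le 2|\theta-\theta'|^{\alpha}$ --- these are the same inequality.
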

\begin{proof}
Using \eqref{4.1} and \eqref{4.2}, and noting that $\mathcal{L}_{\varphi}1=1$, for any $x, x^{\prime}\in I$, we compute,
$$
\begin{aligned}
	&|\mathcal{L}_{ib}^{n}h(x)-\mathcal{L}_{ib}^{n}h(x^{\prime})|\le\sum_{y\in\mathcal{H}_{n}}|e^{(\varphi-ibr)_{n}\circ y(x)}h\circ y(x)-e^{(\varphi-ibr)_{n}\circ y(x^{\prime})}h\circ y(x^{\prime})|\\
	\le&\sum_{y\in\mathcal{H}_{n}}\big|e^{\varphi_{n}\circ y(x)}-e^{\varphi_{n}\circ y(x^{\prime})}\big||h|_{\infty}+e^{\varphi_{n}\circ y(x^{\prime})}\big|e^{ibr_{n}\circ y(x)}-e^{ibr_{n}\circ y(x^{\prime})}\big||h|_{\infty}\\
	&\quad \quad+e^{\varphi_{n}\circ y(x^{\prime})}|h\circ y(x)-h\circ y(x^{\prime})|\\
	\le&\sum_{y\in\mathcal{H}_{n}}e^{\varphi_{n}\circ y(x)+C_{9}^{\prime}}C_{9}^{\prime}|x-x^{\prime}|^{\alpha}|h|_{\infty}\\
	&\quad \quad+e^{\varphi_{n}\circ y(x^{\prime})}2\min\{|br_{n}\circ y(x)-br_{n}\circ y(x^{\prime})|,1\}|h|_{\infty}+e^{\varphi_{n}\circ y(x^{\prime})}|h|_{\alpha}C_{8}^{\alpha}\lambda^{\alpha n}|x-x^{\prime}|^{\alpha}\\
	\le&e^{C_{9}^{\prime}}C_{9}^{\prime}|x-x^{\prime}|^{\alpha}|h|_{\infty}+2|b|^{\alpha}C_{9}^{\prime}|x-x^{\prime}|^{\alpha}|h|_{\infty}+|h|_{\alpha}C_{8}^{\alpha}\lambda^{\alpha n}|x-x^{\prime}|^{\alpha},
\end{aligned}
$$
for some uniform constant $C_{9}^{\prime}>0$. The result then follows by choosing a suitable constant $C_{9}>C_{9}^{\prime}$.
\end{proof}

Recalling that \(||h||_{b}=\max\{|h|_{\infty},|h|_{\alpha}/C_{3}|b|^{\alpha}\}\), we choose \(C_{3}>0\) such that \(C_{9}/C_{3}<1/2\).

\begin{Corollary}\label{Cor 4.2}
	For any $|b|\ge3$, any \(n\in\mathbb{N}^{+}\), and any \(h\in C^{\alpha}(I)\), we have $\frac{|\mathcal{L}_{ib}^{n}h|_{\alpha}}{C_{3}|b|^{\alpha}}\le(1/2+C_{9}\lambda^{\alpha n})||h||_{b}$ and \(||\mathcal{L}_{ib}^{n}h||_{b}\le\max\{1, (1/2+C_{9}\lambda^{\alpha n})\}||h||_{b}\).
\end{Corollary}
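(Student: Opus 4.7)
The plan is to derive the two inequalities as immediate consequences of Lemma \ref{Lemma 4.1} combined with the normalization $\mathcal{L}_{\varphi}1=1$, using the particular choice of $C_3$ that makes $C_9/C_3 < 1/2$. Since everything reduces to routine algebra, I do not expect a genuine obstacle; the main point is keeping the bookkeeping of the two pieces of the norm $||\cdot||_b$ straight.

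First, I would take the Lasota--Yorke inequality from Lemma \ref{Lemma 4.1}, namely $|\mathcal{L}_{ib}^{n}h|_{\alpha}\le C_{9}|b|^{\alpha}|h|_{\infty}+C_{9}\lambda^{\alpha n}|h|_{\alpha}$, and divide both sides by $C_{3}|b|^{\alpha}$. The first term becomes $(C_9/C_3)|h|_\infty$, which by the standing choice $C_9/C_3 < 1/2$ is at most $\tfrac{1}{2}|h|_\infty \le \tfrac{1}{2}\|h\|_b$. The second term becomes $C_9\lambda^{\alpha n}\cdot|h|_\alpha/(C_3|b|^\alpha)\le C_9\lambda^{\alpha n}\|h\|_b$ straight from the definition of $\|h\|_b$. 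Summing yields the first claimed inequality $|\mathcal{L}_{ib}^{n}h|_{\alpha}/(C_{3}|b|^{\alpha})\le(1/2+C_{9}\lambda^{\alpha n})\|h\|_{b}$.

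For the second inequality I need an $L^\infty$ bound on $\mathcal{L}_{ib}^n h$. Since $\varphi$ has been normalized so that $\mathcal{L}_\varphi 1 = 1$, the pointwise triangle inequality gives
\[
|\mathcal{L}_{ib}^{n}h(x)|\le \sum_{y\in\mathcal{H}_{n}}e^{\varphi_{n}\circ y(x)}|h\circ y(x)|\le |h|_{\infty}\,\mathcal{L}_{\varphi}^{n}1(x)=|h|_{\infty}\le \|h\|_b.
\]
Combining this with the first inequality and taking the maximum defining $\|\mathcal{L}_{ib}^n h\|_b$ gives the bound $\|\mathcal{L}_{ib}^{n}h\|_{b}\le\max\{1,\,1/2+C_{9}\lambda^{\alpha n}\}\|h\|_{b}$, as required. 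No nontrivial step is involved beyond the careful choice of the constant $C_3$ already made just before the statement.
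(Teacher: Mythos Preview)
Your proof is correct and follows essentially the same approach as the paper: divide the Lasota--Yorke inequality of Lemma~\ref{Lemma 4.1} by $C_{3}|b|^{\alpha}$, use $C_{9}/C_{3}<1/2$ and the definition of $\|\cdot\|_{b}$ for the H\"older part, and invoke $|\mathcal{L}_{ib}^{n}h|_{\infty}\le|h|_{\infty}$ (from $\mathcal{L}_{\varphi}1=1$) for the sup-norm part. The paper's proof is identical in substance, only slightly terser in justifying the $L^{\infty}$ bound.
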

\begin{proof}
	By Lemma \ref{Lemma 4.1}, 
	\begin{equation*}
		\dfrac{|\mathcal{L}_{ib}^{n}h|_{\alpha}}{C_{3}|b|^{\alpha}}\le \dfrac{C_{9}}{C_{3}}|h|_{\infty}+\dfrac{C_{9}\lambda^{\alpha n}}{C_{3}|b|^{\alpha}}|h|_{\alpha}.
	\end{equation*}
	In particular,
	\begin{equation*}
		\dfrac{|\mathcal{L}_{ib}^{n}h|_{\alpha}}{C_{3}|b|^{\alpha}}\le(1/2+C_{9}\lambda^{\alpha n})||h||_{b}.
	\end{equation*}
	Note that \(|\mathcal{L}_{ib}^{n}h|_{\infty}\le|h|_{\infty}\). Thus, we have \(||\mathcal{L}_{ib}^{n}h||_{b}\le\max\{1, (1/2+C_{9}\lambda^{\alpha n})\}||h||_{b}\) which completes the proof.
\end{proof}

We choose $N_{1}\in\mathbb{N}^{+}$ large enough such that for any $n\ge N_{1}$ we have $1/2+C_{9}\lambda^{\alpha n}\le 3/4$. We first deal with the easy case where \(h\in C^{\alpha}(I)\) satisfies $|h|_{\alpha}\ge 2C_{3}|b|^{\alpha}|h|_{\infty}$ as follows.

\begin{Lemma}\label{Lemma 4.3}
	For any $|b|\ge3$, any \(n\ge N_{1}\), and any \(h\in C^{\alpha}(I)\) with $|h|_{\alpha}\ge 2C_{3}|b|^{\alpha}|h|_{\infty}$, we have $|\mathcal{L}_{ib}^{n}h||_{b}\le\frac{3}{4}||h||_{b}$.
\end{Lemma}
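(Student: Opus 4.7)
The plan is to observe that the hypothesis $|h|_{\alpha}\ge 2C_{3}|b|^{\alpha}|h|_{\infty}$ forces the Hölder seminorm to dominate in the definition of $||h||_{b}$, and moreover forces the sup norm to be strictly smaller than $||h||_{b}$ by a factor of at least $1/2$. This built-in slack on the sup side, combined with the contraction on the seminorm side given by Corollary \ref{Cor 4.2} and the choice of $N_{1}$, should close the estimate.

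More concretely, the first step is to rewrite the hypothesis as $|h|_{\infty}\le |h|_{\alpha}/(2C_{3}|b|^{\alpha})$. This immediately yields $||h||_{b}=|h|_{\alpha}/(C_{3}|b|^{\alpha})$ and, crucially, $|h|_{\infty}\le \tfrac{1}{2}||h||_{b}$. The second step is to handle the sup-norm contribution of $\mathcal{L}_{ib}^{n}h$: using the trivial bound $|\mathcal{L}_{ib}^{n}h|_{\infty}\le |h|_{\infty}$ (which holds since $|e^{(\varphi-ibr)_{n}\circ y(x)}|=e^{\varphi_{n}\circ y(x)}$ and $\mathcal{L}_{\varphi}1=1$), we obtain
\begin{equation*}
|\mathcal{L}_{ib}^{n}h|_{\infty}\le |h|_{\infty}\le \tfrac{1}{2}||h||_{b}.
\end{equation*}

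The third step is to control the Hölder seminorm via Corollary \ref{Cor 4.2}: for $n\ge N_{1}$, we have $1/2+C_{9}\lambda^{\alpha n}\le 3/4$, so
\begin{equation*}
\frac{|\mathcal{L}_{ib}^{n}h|_{\alpha}}{C_{3}|b|^{\alpha}}\le \bigl(\tfrac{1}{2}+C_{9}\lambda^{\alpha n}\bigr)||h||_{b}\le \tfrac{3}{4}||h||_{b}.
\end{equation*}
Finally, combining the two bounds,
\begin{equation*}
||\mathcal{L}_{ib}^{n}h||_{b}=\max\Bigl\{|\mathcal{L}_{ib}^{n}h|_{\infty},\,\tfrac{|\mathcal{L}_{ib}^{n}h|_{\alpha}}{C_{3}|b|^{\alpha}}\Bigr\}\le \max\bigl\{\tfrac{1}{2},\tfrac{3}{4}\bigr\}\,||h||_{b}=\tfrac{3}{4}||h||_{b},
\end{equation*}
which is the desired inequality. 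There is essentially no obstacle here; the lemma is a direct bookkeeping consequence of Corollary \ref{Cor 4.2}, where the regime $|h|_{\alpha}\ge 2C_{3}|b|^{\alpha}|h|_{\infty}$ is precisely the one in which the Lasota--Yorke-type contraction on the seminorm automatically propagates to a contraction in $||\cdot||_{b}$. The harder case (to be handled separately, presumably via the cancellation Lemma \ref{Lemma 4.4}) is the complementary one where $|h|_{\alpha}< 2C_{3}|b|^{\alpha}|h|_{\infty}$, since then the sup norm dominates $||h||_{b}$ and no gain can be extracted from the Lasota--Yorke inequality alone.
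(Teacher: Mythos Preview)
Your proof is correct and follows essentially the same approach as the paper's own proof: bound the sup norm via $|\mathcal{L}_{ib}^{n}h|_{\infty}\le|h|_{\infty}\le\tfrac{1}{2}||h||_{b}$ using the hypothesis, bound the seminorm via Corollary~\ref{Cor 4.2} and the choice of $N_{1}$, and take the maximum. The only difference is that you spell out the justification for $|\mathcal{L}_{ib}^{n}h|_{\infty}\le|h|_{\infty}$ explicitly, which the paper leaves implicit.
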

\begin{proof}
	On the one hand, $|\mathcal{L}_{ib}^{n}h|_{\infty}\le|h|_{\infty}\le|h|_{\alpha}/2C_{3}|b|^{\alpha}\le\frac{1}{2}||h||_{b}$. On the other hand, by Corollary \ref{Cor 4.2}, we have $\frac{|\mathcal{L}_{ib}^{n}h|_{\alpha}}{C_{3}|b|^{\alpha}}\le(1/2+C_{9}\lambda^{\alpha n})||h||_{b}\le\frac{3}{4}||h||_{b}$. Thus, by definition, we have $|\mathcal{L}_{ib}^{n}h||_{b}\le\frac{3}{4}||h||_{b}$.
\end{proof}

It remains to deal with the more difficult situation where \(h\in C^{\alpha}(I)\) satisfies $|h|_{\alpha}\le 2C_{3}|b|^{\alpha}|h|_{\infty}$. To this end, we will use the assumption of Theorem \ref{Stretch exponential mixing for semiflow} to obtain the following cancellation of terms in a transfer operator.

\begin{Lemma}\label{Lemma 4.4}
There exist $N_{2}\in\mathbb{N}^{+}$, $\delta_{3}>0$ and $\delta_{4}\in(0,1)$ such that for any $|b|\ge3$ and any \(h\in C^{\alpha}(I)\) with $|h|_{\alpha}\le 2C_{3}b|^{\alpha}|h|_{\infty}$, there exists a subset $B\subset I$ with $\mu(B)\ge\delta_{3}$ such that $|\mathcal{L}_{ib}^{N_{2}}h(x)|\le(1-\delta_{4})|h|_{\infty}$ for any $x\in B$.
\end{Lemma}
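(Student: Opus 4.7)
The plan is to expand
$$\mathcal{L}_{ib}^{N_2} h(x) = \sum_{y\in\mathcal{H}_{N_2}} e^{\varphi_{N_2}\circ y(x)}\, e^{-ibr_{N_2}\circ y(x)}\, h\circ y(x)$$
and improve on the trivial bound $|\mathcal{L}_{ib}^{N_2}h(x)| \le |h|_\infty$ (which follows from $\mathcal{L}_\varphi 1 = 1$) by exploiting misalignment of the complex phases in a single pair of inverse branches. Two ingredients will be used. First, the weak-UNI statement Lemma \ref{Lemma 5.1}, promised in Section \ref{sec 5} and obtained from the non-cohomology hypothesis on $r$, provides an $N_2$, two distinguished inverse branches $y_1, y_2 \in \mathcal{H}_{N_2}$, and a subinterval $J \subset I$ on which the temporal difference $\tau := r_{N_2}\circ y_1 - r_{N_2}\circ y_2$ satisfies a uniform Lipschitz lower bound $|\partial_x\tau| \ge c > 0$. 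Second, the regularity hypothesis $|h|_\alpha \le 2C_3|b|^\alpha|h|_\infty$ keeps $h$ essentially constant on intervals of length small compared to $|b|^{-1}$.

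Fix $x_0 \in I$ with $|h(x_0)| \ge \tfrac{3}{4}|h|_\infty$. The Hölder bound gives a ball $U \ni x_0$ of radius $\sim|b|^{-1}$ on which $|h| \ge \tfrac{1}{2}|h|_\infty$ and $\arg h$ varies by less than, say, $1/10$. Shrinking $J$ using the Markov and mixing properties of $T$, arrange that $y_1(J), y_2(J) \subset U$, so both $y_1$- and $y_2$-summands contribute moduli comparable to $|h|_\infty$ and the intrinsic arguments $\arg h\circ y_k(x)$ vary only slightly on $J$. Partition $J$ into pieces of length $\delta/(c|b|)$ for a small absolute $\delta$ chosen so that $2C_3(\delta/c)^\alpha \le 1/20$; on each piece, $|h\circ y_k|$ and $\arg h\circ y_k$ are constant to order $|h|_\infty/20$, while the phase $-b\tau$ makes a definite increment of size $\approx \delta$.

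As $x$ traverses $J$, the combined argument $-b\tau(x) + \arg h\circ y_1(x) - \arg h\circ y_2(x)$ therefore wraps around $\mathbb{R}/2\pi\mathbb{Z}$ roughly $|J|c|b|/(2\pi)$ times; on a definite proportion of the pieces (of total Lebesgue measure bounded below by a constant times $\delta|J|$, independent of $b$) this combined argument lies in $[\pi/2, 3\pi/2]$. On any such piece, the elementary identity
$$|z_1+z_2|^2 = (|z_1|+|z_2|)^2 - 2|z_1||z_2|\bigl(1-\cos(\arg z_1 - \arg z_2)\bigr),$$
applied to the $y_1, y_2$ contributions with the remaining summands bounded trivially and the weights $e^{\varphi_{N_2}\circ y_k}$ controlled from below via \eqref{4.2}, produces the pointwise estimate $|\mathcal{L}_{ib}^{N_2}h(x)| \le (1-\delta_4)|h|_\infty$ for some absolute $\delta_4 > 0$. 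Taking $B$ to be the union of these favorable pieces, and using that $d\mu_\varphi = h_\varphi\, dx$ with $h_\varphi$ bounded above and below, yields $\mu_\varphi(B) \ge \delta_3 > 0$ uniformly in $b$.

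The principal obstacle is precisely securing $\mu_\varphi(B) \ge \delta_3$ \emph{uniformly in $|b|$}. A classical Dolgopyat-type cancellation confined to a single window of length $|b|^{-1}$ would only yield $\mu_\varphi(B) \sim |b|^{-1}$, which suffices for rapid but not stretched-exponential mixing. The uniform lower bound here rests on two extra features: the Lipschitz (not merely Hölder) regularity of $r$, which ensures $\partial_x(b\tau)$ is of size exactly $|b|$ so that the number of full phase revolutions across $J$ is proportional to $|b|$; and connectivity of the cylinders of $T$, which guarantees that these revolutions are traversed continuously, so that a definite proportion of the $|b|$-many pieces lies in the favorable arc. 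Establishing Lemma \ref{Lemma 5.1} and executing this global phase-counting is exactly the content of Section \ref{sec 5}.
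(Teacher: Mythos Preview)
Your architecture is right and you correctly identify that the crux is getting $\mu_\varphi(B)\ge\delta_3$ uniformly in $|b|$. However, the mechanism you propose has a genuine gap at the step ``arrange that $y_1(J), y_2(J) \subset U$''. The ball $U$ has radius comparable to $|b|^{-1}$, while $y_1,y_2\in\mathcal{H}_{N_2}$ are fixed diffeomorphisms with bounded distortion, so $y_k(J)\subset U$ forces $|J|$ to be of order $|b|^{-1}$. But then $-b\tau$ varies by only $O(1)$ across $J$, not by order $|b|$, and your favorable set has measure $O(|b|^{-1})$ --- precisely the obstruction you warned against in your last paragraph. Deepening the Markov refinement to compensate would make $N_2$ depend on $b$, contrary to the statement. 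A secondary issue: Lemma~\ref{Lemma 5.1} does not supply a pointwise lower bound $|\partial_x\tau|\ge c>0$; since $r$ is only Lipschitz, $\tau$ need not be differentiable, and the lemma gives only $|\tau(x_1)-\tau(x_2)|\ge D$ at two fixed points.

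The paper avoids localizing near a maximum of $|h|$ altogether. It works on the fixed interval $[x_1,x_2]$ from Lemma~\ref{Lemma 5.1}, uses the intermediate value theorem together with a pigeonhole argument (the Claim in Section~\ref{sec 5}) to produce of order $|b|$ short subintervals of $[x_1,x_2]$ on each of which $b\tau$ makes a definite increment $\pi/3$, and then handles each such subinterval by a dichotomy (Lemma~\ref{Lemma 5.2}): either some $|h|\circ y_k$ dips below $\tfrac12|h|_\infty$ there, in which case that summand is already defective and one gets trivial cancellation; or both $|h|\circ y_k\ge\tfrac12|h|_\infty$ throughout, in which case the hypothesis $|h|_\alpha\le 2C_3|b|^\alpha|h|_\infty$ controls the variation of $\arg(h\circ y_k)$ and the built-in phase increment forces the $y_1$ and $y_2$ contributions out of alignment at one of the two endpoints. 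This easy/hard case split, applied subinterval by subinterval rather than globally, is the idea your plan is missing.
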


The above lemma will be proved in the next section. As a corollary of Lemma \ref{Lemma 4.4}, we can obtain a cancellation the oscillatory integral $\int_{I}|\mathcal{L}_{ib}^{N_{2}}h|d\mu_{\varphi}$ as follows:
\begin{equation}\label{4.3}
	\begin{aligned}
		\int_{I}|\mathcal{L}_{ib}^{N_{2}}h|d\mu_{\varphi}=&\int_{B}|\mathcal{L}_{ib}^{N_{2}}h|d\mu_{\varphi}+\int_{I-B}|\mathcal{L}_{ib}^{N_{2}}h|d\mu_{\varphi}\\
		\le&(1-\delta_{4})|h|_{\infty}\mu_{\varphi}(B)+|h|_{\infty}\mu_{\varphi}(I-B)\\
		=&(1-\delta_{4}\mu_{\varphi}(B))|h|_{\infty}\le(1-\delta_{4}\delta_{3})|h|_{\infty}.
	\end{aligned}
\end{equation}
We can strengthen the above $L^{1}$ contraction to a $C^{0}$ contraction as follows.

\begin{Lemma}\label{Lemma 4.5}
There exists $C_{10}>0$ such that for any $|b|\ge3$ and any $h\in C^{\alpha}(I)$ with $|h|_{\alpha}\le 2C_{3}|b|^{\alpha}|h|_{\infty}$,
	$$|\mathcal{L}_{ib}^{C_{10}\log|b|+N_{2}}h|_{\infty}\le(1-2^{-1}\delta_{4}\delta_{3})|h|_{\infty}.$$
\end{Lemma}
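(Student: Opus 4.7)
The plan is to bootstrap the $L^{1}$-contraction (4.3) to an $L^{\infty}$-contraction by iterating $\mathcal{L}_{ib}$ a further $\lfloor C_{10}\log|b|\rfloor$ steps and exploiting the spectral gap of the real transfer operator $\mathcal{L}_{\varphi}$ on $C^{\alpha}(I)$. The starting observation is the elementary pointwise domination $|\mathcal{L}_{ib}^{m}g(x)|\le \mathcal{L}_{\varphi}^{m}|g|(x)$, valid for any $g\in C(I)$ and $m\ge 0$, since the oscillatory factors $e^{-ibr_{m}\circ y}$ have unit modulus. Specialising to $g:=\mathcal{L}_{ib}^{N_{2}}h$ yields, for every $x\in I$ and $m\ge 0$,
\begin{equation*}
|\mathcal{L}_{ib}^{N_{2}+m}h(x)|\le \mathcal{L}_{\varphi}^{m}\bigl|\mathcal{L}_{ib}^{N_{2}}h\bigr|(x).
\end{equation*}

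Next, I would control the H\"older norm of $g:=|\mathcal{L}_{ib}^{N_{2}}h|$. Since $||z_{1}|-|z_{2}||\le |z_{1}-z_{2}|$, taking modulus does not increase the $\alpha$-seminorm, so Lemma \ref{Lemma 4.1} together with the standing assumption $|h|_{\alpha}\le 2C_{3}|b|^{\alpha}|h|_{\infty}$ gives
\begin{equation*}
|g|_{\alpha}\le \bigl|\mathcal{L}_{ib}^{N_{2}}h\bigr|_{\alpha}\le C_{9}|b|^{\alpha}|h|_{\infty}+C_{9}\lambda^{\alpha N_{2}}\cdot 2C_{3}|b|^{\alpha}|h|_{\infty}\le C_{11}|b|^{\alpha}|h|_{\infty},
\end{equation*}
for a uniform constant $C_{11}>0$, while of course $|g|_{\infty}\le|h|_{\infty}$. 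I would then invoke the classical quantitative spectral gap for $\mathcal{L}_{\varphi}$ on $C^{\alpha}(I)$, standard for the normalised potential $\varphi$ (see e.g.\ \cite{Bal00}): there exist $C_{12}>0$ and $\tau\in(0,1)$ such that for every $g\in C^{\alpha}(I)$ and every $m\ge 1$,
\begin{equation*}
\sup_{x\in I}\bigg|\mathcal{L}_{\varphi}^{m}g(x)-\int_{I}g\, d\mu_{\varphi}\bigg|\le C_{12}\tau^{m}|g|_{\alpha}.
\end{equation*}

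Combining the three displays with the $L^{1}$-bound (4.3), namely $\int_{I}g\,d\mu_{\varphi}\le (1-\delta_{3}\delta_{4})|h|_{\infty}$, produces
\begin{equation*}
\bigl|\mathcal{L}_{ib}^{N_{2}+m}h\bigr|_{\infty}\le (1-\delta_{3}\delta_{4})|h|_{\infty}+C_{11}C_{12}\tau^{m}|b|^{\alpha}|h|_{\infty}.
\end{equation*}
It then suffices to choose $C_{10}>0$ large enough that $C_{11}C_{12}\tau^{m}|b|^{\alpha}\le \tfrac{1}{2}\delta_{3}\delta_{4}$ whenever $m\ge C_{10}\log|b|$; this is possible uniformly in $|b|\ge 3$ because the required inequality is $m\log\tau^{-1}\ge \alpha\log|b|+\log(2C_{11}C_{12}/\delta_{3}\delta_{4})$, which any $C_{10}>\alpha/\log\tau^{-1}$ makes valid for $|b|$ bounded away from $1$. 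Setting $m=\lceil C_{10}\log|b|\rceil$ and absorbing the ceiling into a slight increase of $C_{10}$ yields the claimed contraction $(1-2^{-1}\delta_{3}\delta_{4})|h|_{\infty}$. The whole argument is a routine trade-off between the $|b|^{\alpha}$ growth of the H\"older norm in Lemma \ref{Lemma 4.1} and the exponential $\tau^{m}$ decay of $\mathcal{L}_{\varphi}^{m}$ on mean-zero H\"older functions; the only non-computational ingredient is the spectral gap of $\mathcal{L}_{\varphi}$, which one should not expect to be an obstacle.
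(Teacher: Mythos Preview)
Your proof is correct and follows essentially the same route as the paper: dominate $|\mathcal{L}_{ib}^{N_{2}+m}h|$ by $\mathcal{L}_{\varphi}^{m}|\mathcal{L}_{ib}^{N_{2}}h|$, apply the spectral gap of $\mathcal{L}_{\varphi}$ to split off the $L^{1}$-mass already controlled by (4.3), and absorb the exponentially decaying remainder by taking $m\asymp\log|b|$. One small slip: the spectral-gap bound should carry the full H\"older norm $\|g\|_{\alpha}$ on the right, not just the seminorm $|g|_{\alpha}$ (a locally constant nonconstant $g$ has $|g|_{\alpha}=0$ but $\mathcal{L}_{\varphi}g\neq\int g$ in general); this is harmless since $|g|_{\infty}\le|h|_{\infty}$ is already dominated by your $C_{11}|b|^{\alpha}|h|_{\infty}$ term.
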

\begin{proof}
Since $\mathcal{L}_{\varphi}$ acting on $C^{\alpha}(I)$ has a spectral gap \cite{Bal00}, there exist $C_{10}^{\prime}>0$ and $\delta_{5}\in(0,1)$ such that for any $h\in C^{\alpha}(I)$ and any $n\in\mathbb{N}$,
\begin{equation}\label{4.4}
||\mathcal{L}_{\varphi}^{n}h||_{\alpha}\le\int_{I}|h|d\mu_{\varphi}+C_{10}^{\prime}\delta_{5}^{n}||h||_{\alpha}.
\end{equation}
Now, by \eqref{4.3}, \eqref{4.4} and Corollary \ref{Cor 4.2}, for some uniform constant $C_{10}>0$ and any $|b|\ge3$,
	$$
	\begin{aligned}
		&|\mathcal{L}_{ib}^{C_{10}\log|b|+N_{2}}h|_{\infty}\le|\mathcal{L}_{\varphi}^{C_{10}\log|b|}|\mathcal{L}_{ib}^{N_{2}}h||_{\infty}\\
		\le&\int_{I}|\mathcal{L}_{ib}^{N_{2}}h|d\mu_{\varphi}+C_{10}^{\prime}\delta_{5}^{C_{10}\log|b|}||\mathcal{L}_{ib}^{N_{2}}h||_{\alpha}\\
		\le&(1-\delta_{4}\delta_{3})|h|_{\infty}+2^{-1}\delta_{4}\delta_{3}|h|_{\infty}\\
		=&(1-2^{-1}\delta_{4}\delta_{3})|h|_{\infty}
	\end{aligned}
	$$
	which completes the proof.
\end{proof}

The above $C^{0}$ contraction implies the following $||\cdot||_{b}$ contraction.

\begin{Corollary}\label{Cor 4.6}
For any $|b|\ge3$, and any $h\in C^{\alpha}(I)$ with $|h|_{\alpha}\le 2C_{3}|b|^{\alpha}|h|_{\infty}$,
	$$||\mathcal{L}_{ib}^{N_{2}+C_{10}\log|b|}h||_{b}\le(1-2^{-1}\delta_{4}\delta_{3})||h||_{b}.$$
\end{Corollary}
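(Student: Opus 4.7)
The plan is to bound the two components of the norm $\|\mathcal{L}_{ib}^{n}h\|_b = \max\{|\mathcal{L}_{ib}^{n}h|_\infty,\ |\mathcal{L}_{ib}^{n}h|_\alpha/(C_3|b|^\alpha)\}$ separately, with $n := N_2 + C_{10}\log|b|$ (rounded up to an integer if necessary), and then take the maximum. The $L^\infty$ part is immediate from Lemma \ref{Lemma 4.5}: under the standing hypothesis $|h|_\alpha \le 2C_3|b|^\alpha|h|_\infty$, the lemma yields $|\mathcal{L}_{ib}^{n}h|_\infty \le (1-2^{-1}\delta_4\delta_3)|h|_\infty$, and since $|h|_\infty \le \|h\|_b$ this becomes $|\mathcal{L}_{ib}^{n}h|_\infty \le (1-2^{-1}\delta_4\delta_3)\|h\|_b$.

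For the H\"older seminorm component I would invoke the Lasota--Yorke bound from Corollary \ref{Cor 4.2}, which gives $|\mathcal{L}_{ib}^{n}h|_\alpha/(C_3|b|^\alpha) \le (1/2 + C_9\lambda^{\alpha n})\|h\|_b$. Hence the task reduces to verifying the numerical inequality $1/2 + C_9\lambda^{\alpha n} \le 1-2^{-1}\delta_4\delta_3$, equivalently $C_9\lambda^{\alpha n} \le 1/2 - 2^{-1}\delta_4\delta_3$. Because $\delta_4\in(0,1)$ and $\delta_3>0$ can be assumed small, the right-hand side is a fixed positive constant; and since $n \ge N_2 + C_{10}\log 3$ with $\lambda\in(0,1)$, this inequality is ensured by enlarging $N_2$ (or $C_{10}$) at the outset. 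Enlarging $N_2$ is harmless: from $|\mathcal{L}_{ib}g|\le \mathcal{L}_\varphi|g|$ and $\mathcal{L}_\varphi 1=1$ one gets $|\mathcal{L}_{ib}^{m+k}h|_\infty \le |\mathcal{L}_{ib}^{m}h|_\infty$ for all $k\ge 0$, so the $L^\infty$ contraction of Lemma \ref{Lemma 4.5} persists for any larger exponent, and the hypothesis on $h$ is about the original $h$ only.

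Combining the two bounds and taking the maximum yields $\|\mathcal{L}_{ib}^{n}h\|_b \le (1-2^{-1}\delta_4\delta_3)\|h\|_b$, as required. I do not expect any substantive obstacle here; the argument is a short packaging of the $L^\infty$ contraction delivered by Lemma \ref{Lemma 4.5} together with the H\"older-seminorm control from Corollary \ref{Cor 4.2}, the only delicate point being the harmless adjustment of $N_2$ so that the Lasota--Yorke contraction rate $1/2 + C_9\lambda^{\alpha n}$ falls below $1-2^{-1}\delta_4\delta_3$.
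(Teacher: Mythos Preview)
Your proposal is correct and follows essentially the same approach as the paper, which simply cites Corollary~\ref{Cor 4.2} and Lemma~\ref{Lemma 4.5} in a one-line proof. You have spelled out the two-component bound and correctly flagged the numerical condition $1/2 + C_9\lambda^{\alpha n} \le 1 - 2^{-1}\delta_4\delta_3$, together with the harmless enlargement of $N_2$ (or $C_{10}$) needed to secure it; this detail is left implicit in the paper.
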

\begin{proof}
	This comes from Corollary \ref{Cor 4.2} and Lemma \ref{Lemma 4.5}.
\end{proof}

\begin{Proposition}\label{Dolgopyat type estimate}
There exists $C_{11}>0$ such that for any $|b|\ge3$ and any $h\in C^{\alpha}(I)$, we have $||\mathcal{L}_{ib}^{C_{11}\log|b|}h||_{b}\le(1-2^{-1}\delta_{4}\delta_{3})||h||_{b}$.
\end{Proposition}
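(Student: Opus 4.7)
The strategy is to combine Lemma~\ref{Lemma 4.3} and Corollary~\ref{Cor 4.6} via a dichotomy, then pad the resulting contraction up to an exponent of order $\log|b|$ using Corollary~\ref{Cor 4.2}. First I would arrange, by shrinking $\delta_{3}$ or $\delta_{4}$ in Lemma~\ref{Lemma 4.4} if necessary (which only weakens that lemma's conclusion and is therefore harmless), that $\eta := 1 - 2^{-1}\delta_{3}\delta_{4} \ge 3/4$. I also enlarge $C_{10}$ or $N_{2}$ a priori so that $M(b) := N_{2} + \lceil C_{10}\log|b|\rceil \ge N_{1}$ for every $|b| \ge 3$; this preserves both Lemma~\ref{Lemma 4.5} and Corollary~\ref{Cor 4.6}.

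Given any $|b| \ge 3$ and $h \in C^{\alpha}(I)$, exactly one of the alternatives $|h|_{\alpha} \le 2C_{3}|b|^{\alpha}|h|_{\infty}$ or $|h|_{\alpha} > 2C_{3}|b|^{\alpha}|h|_{\infty}$ holds. In the first (flat) case Corollary~\ref{Cor 4.6} yields $\|\mathcal{L}_{ib}^{M(b)} h\|_{b} \le \eta\|h\|_{b}$. In the second (steep) case, Lemma~\ref{Lemma 4.3} applied with $n = M(b) \ge N_{1}$ yields $\|\mathcal{L}_{ib}^{M(b)} h\|_{b} \le \tfrac{3}{4}\|h\|_{b} \le \eta\|h\|_{b}$. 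Hence the uniform contraction $\|\mathcal{L}_{ib}^{M(b)} h\|_{b} \le \eta\|h\|_{b}$ holds for every $h$.

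To upgrade the exponent $M(b)$ to the prescribed $C_{11}\log|b|$, I would choose $C_{11}$ so large that $N(b) := \lfloor C_{11}\log|b|\rfloor \ge M(b) + N_{1}$ for all $|b| \ge 3$; since $M(b) + N_{1} \le C_{10}\log|b| + (N_{1} + N_{2} + 1)$ and $\log|b| \ge \log 3$, such a constant exists. Writing $\mathcal{L}_{ib}^{N(b)} = \mathcal{L}_{ib}^{N(b) - M(b)} \mathcal{L}_{ib}^{M(b)}$ and noting that $N(b) - M(b) \ge N_{1}$ so that $1/2 + C_{9}\lambda^{\alpha(N(b) - M(b))} \le 3/4$, Corollary~\ref{Cor 4.2} gives $\|\mathcal{L}_{ib}^{N(b) - M(b)} g\|_{b} \le \|g\|_{b}$ for every $g$, and therefore $\|\mathcal{L}_{ib}^{N(b)} h\|_{b} \le \eta\|h\|_{b}$, which is the desired estimate.

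There is essentially no conceptual obstacle, since all the real work was carried out in Lemmas~\ref{Lemma 4.3}--\ref{Lemma 4.5} and Corollary~\ref{Cor 4.6}. The only issue is the bookkeeping required to reconcile the three natural time scales $N_{1}$, $N_{2}$, and $C_{10}\log|b|$ with the single exponent $C_{11}\log|b|$ in the statement, and to prevent any blow-up from the residual block of iterations after the main contraction step; both are handled by ensuring that the residual block has length at least $N_{1}$, where Corollary~\ref{Cor 4.2} furnishes a norm bound of $1$.
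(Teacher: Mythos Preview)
Your proof is correct and follows exactly the approach the paper takes: the paper's proof is the single line ``This comes from Corollary~\ref{Cor 4.2}, Lemma~\ref{Lemma 4.3} and Corollary~\ref{Cor 4.6},'' and you have simply written out the dichotomy and the padding step that this sentence encodes. The extra care you take with integer parts and with ensuring $\eta\ge 3/4$ and $M(b)\ge N_{1}$ is precisely the bookkeeping the paper leaves implicit.
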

\begin{proof}
This comes from Corollary \ref{Cor 4.2}, Lemma \ref{Lemma 4.3} and Corollary \ref{Cor 4.6}.
\end{proof}

\begin{proof}[\textbf{Proof of Proposition \ref{Prop 3.4}}]
Similarly to the proof of Lemma \ref{Lemma 4.1}, one can prove that there exists $C_{12}^{\prime}>0$ such that for any $s=a+ib$ and any $h\in C^{\alpha}(I)$,
\begin{equation*}
|(\mathcal{L}_{s}-\mathcal{L}_{ib})h|_{\infty}\le C_{12}^{\prime}|a||h|_{\infty}\quad\text{and}\quad |(\mathcal{L}_{s}-\mathcal{L}_{ib})h|_{\alpha}\le C_{12}^{\prime}|a||b|^{\alpha}|h|_{\infty}+C_{12}^{\prime}|a||h|_{\alpha}.
\end{equation*}
In particular, from the above, we deduce that for any $s=a+ib$ with $|b|\ge3$ and any $h\in C^{\alpha}(I)$,
\begin{equation}\label{4.5}
||(\mathcal{L}_{s}-\mathcal{L}_{ib})h||_{b}\le C_{12}^{\prime}(C_{3}^{-1}+1)|a|||h||_{b}.
\end{equation}
To prove the second part of Proposition \ref{Prop 3.4}, by Proposition \ref{Dolgopyat type estimate} and Corollary \ref{Cor 4.2}, for any $|b|\ge3$ and any $h\in C^{\alpha}(I)$ we have
\begin{equation}\label{4.6}
	\begin{aligned}
||(1-\mathcal{L}_{ib})^{-1}h||_{b}\le&\sum_{n=0}^{\infty}||\mathcal{L}_{ib}^{n}h||_{b}=\sum_{k=0}^{\infty}\sum_{j=0}^{C_{11}\log|b|-1}||\mathcal{L}_{ib}^{k\log|b|+j}h||_{b}\\
\le&\sum_{k=0}^{\infty}\sum_{j=0}^{C_{11}\log|b|-1}2C_{9}||\mathcal{L}_{ib}^{k\log|b|}h||_{b}\le\sum_{k=0}^{\infty}C_{11}2C_{9}\log|b|||\mathcal{L}_{ib}^{k\log|b|}h||_{b}\\
\le&C_{12}\log|b|||h||_{b},
\end{aligned}
\end{equation}
for some uniform constant $C_{12}>0$. In particular, by \eqref{4.5} and \eqref{4.6}, provided $|a|\le\frac{\delta_{1}}{\log|b|}$ where $\delta_{1}$ is small sufficient, we deduce that $||\frac{\mathcal{L}_{s}-\mathcal{L}_{ib}}{1-\mathcal{L}_{ib}}||_{b}\le 1/2$.  Now, we can express
\begin{equation*}
	\dfrac{1}{1-\mathcal{L}_{s}}=\dfrac{1}{1-\frac{\mathcal{L}_{s}-\mathcal{L}_{ib}}{1-\mathcal{L}_{ib}}}\dfrac{1}{1-\mathcal{L}_{ib}}.
\end{equation*}
Therefore, by \eqref{4.6} and $||\frac{\mathcal{L}_{s}-\mathcal{L}_{ib}}{1-\mathcal{L}_{ib}}||_{b}\le 1/2$, we have $||(1-\mathcal{L}_{s})^{-1}h||_{b}\le 2C_{12}\log|b| ||h||_{b}$, which proves the second part of Proposition \ref{Prop 3.4}. Finally, the proof of the first part of Proposition \ref{Prop 3.4} follows from Corollary \ref{Cor 4.2} and \eqref{4.5}.
\end{proof}

\section{Proof of Lemma \ref{Lemma 4.4}}\label{sec 5}

In this section, we present the proof of Lemma \ref{Lemma 4.4}, leveraging the assumptions of Theorem \ref{Stretch exponential mixing for semiflow}. Due to the non-smoothness of the roof function \(r\), we lack the uniformly non-integrable (UNI) condition crucial in Dolgopyat's proof of exponential mixing. However, assuming that \(r\) is not cohomologous to locally constant functions, we establish a weaker UNI condition (Lemma \ref{Lemma 5.1}), applicable even when \(r\) is H\"older continuous. This step provides a modest cancellation estimate on a small set, which is adequate for demonstrating rapid mixing. Moreover, under the additional assumption that \(r\) is Lipschitz continuous and considering Lemma \ref{Lemma 5.1} in conjunction with the connectivity of the domain \(I\), we attain a cancellation estimate over a larger set with uniformly bounded Lebesgue measure. This progression facilitates the straightforward derivation of the conclusion of Lemma \ref{Lemma 4.4}.

Recalling, $I=\bigsqcup_{i=1}^{N}I_{i}$ and for each \(1 \leq i \leq N\) we set \([i] := I_{i}\). For any \(n \in \mathbb{N}^{+}\) and each \(1 \leq i_{0}, \ldots, i_{n} \leq N\), when \([i_{0}] \cap \cdots \cap T^{-n}[i_{n}] \neq \emptyset\), we set \([i_{0} \cdots i_{n}] := [i_{0}] \cap \cdots \cap T^{-n}[i_{n}]\). By definition, for each \(n \in \mathbb{N}^{+}\) and each \([i_{0} \cdots i_{n}]\), we have \(T^{n}\) is a diffeomorphism from \([i_{0} \cdots i_{n}]\) to \([i_{n}]\). In particular, an element \(y\) in \(\mathcal{H}_{n}\) which is the set of inverse branches of \(T^{n}\) has the form \(y := (T^{n} |_{[i_{0} \cdots i_{n}]})^{-1}\). We say a function $h\in C^{Lip}(I)$ is not cohomologous to locally constant functions if there is no $\chi$ which is constant on each $[ij]$ and $u\in C^{Lip}(I)$ such that $h=\chi+u\circ T-u$. 

\begin{Lemma}\label{Lemma 5.1}
There exist $D>0$ and $x_{1}\not=x_{2}$ belong to some $I_{i}$ such that for arbitrarily large $n\in\mathbb{N}$ there exist two inverse branches $y_{1}\not=y_{2}\in\mathcal{H}_{n}$ such that 
$$|R_{y_{1},y_{2}}(x_{1})-R_{y_{1},y_{2}}(x_{2})|\ge D,$$
where $R_{y_{1},y_{2}}=r_{n}\circ y_{1}-r_{n}\circ y_{2}$.
\end{Lemma}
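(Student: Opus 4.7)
I proceed by contradiction, aiming to derive from the negation of Lemma~\ref{Lemma 5.1} a decomposition $r=\chi+v\circ T-v$ with $v\in C^{Lip}(I)$ and $\chi$ constant on each two-cylinder $[ij]$, contradicting the hypothesis of Theorem~\ref{Stretch exponential mixing for semiflow}. The negation asserts that for every $i$, every $x_{1}\neq x_{2}\in I_{i}$, and every $D>0$, all sufficiently large $n$ satisfy $|R_{y_{1},y_{2}}(x_{1})-R_{y_{1},y_{2}}(x_{2})|<D$ uniformly over distinct pairs $y_{1},y_{2}\in\mathcal{H}_{n}$ defined on $I_{i}$; equivalently, the oscillation $r_{n}\circ y(x_{1})-r_{n}\circ y(x_{2})$ becomes asymptotically independent of $y\in\mathcal{H}_{n}$ as $n\to\infty$.

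To construct $v$, I fix a base point $p_{i}\in I_{i}$ for each $i$ and a compatible sequence of inverse branches $(y^{(n)})$ on $I_{i}$, meaning $T\circ y^{(n+1)}=y^{(n)}$ (so the underlying cylinder grows by one symbol on the left at each step). The identity $r_{n+1}\circ y^{(n+1)}(x)=r(y^{(n+1)}(x))+r_{n}\circ y^{(n)}(x)$ combined with the contraction \eqref{4.1} and the Lipschitz regularity of $r$ yields
\[
\bigl|\tilde v_{n+1}(x)-\tilde v_{n}(x)\bigr|\leq C_{8}|r|_{Lip}\lambda^{n+1}|x-p_{i}|,
\]
where $\tilde v_{n}(x):=r_{n}\circ y^{(n)}(x)-r_{n}\circ y^{(n)}(p_{i})$. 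Thus $(\tilde v_{n})$ is Cauchy, and I set $v(x):=\lim_{n\to\infty}\tilde v_{n}(x)$, yielding the Lipschitz bound $|v(x)-v(x')|\leq C_{8}|r|_{Lip}(1-\lambda)^{-1}|x-x'|$ on each $I_{i}$. The negation then implies that $v$ is independent of the choice of compatible sequence: for a distinct compatible sequence $(\tilde y^{(n)})$, the difference between the corresponding partial sums at $x$ equals $R_{y^{(n)},\tilde y^{(n)}}(x)-R_{y^{(n)},\tilde y^{(n)}}(p_{i})$, which tends to zero. Performing this on each interval produces a globally defined $v\in C^{Lip}(I)$.

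To verify the cocycle identity, fix $x,x'\in[ij]$. Let $\sigma_{ij}:I_{j}\to[ij]$ be the inverse branch of $T|_{[ij]}$ and define $z^{(n+1)}:=y^{(n)}\circ\sigma_{ij}$. A direct verification shows that $(z^{(n+1)})$ is a compatible sequence of inverse branches of $T^{n+1}$ on $I_{j}$, and tracking the extra initial step in the Birkhoff sum yields
\[
r_{n+1}\circ z^{(n+1)}(T(x))=r(x)+r_{n}\circ y^{(n)}(x).
\]
Subtracting the analogous identity at $x'$ and passing to the limit, using the independence of the compatible sequence to evaluate $v(T(x))-v(T(x'))$ via $(z^{(n+1)})$ and $v(x)-v(x')$ via $(y^{(n)})$, gives $v(T(x))-v(T(x'))=(r(x)-r(x'))+(v(x)-v(x'))$. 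Rearranging, $\chi(x):=r(x)+v(x)-v(T(x))$ is constant on each $[ij]$, contradicting the hypothesis that $r$ is not cohomologous to a locally constant function.

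The main obstacle I anticipate is making the independence of the limit $v$ under change of compatible sequence rigorous: one must deploy the uniform negation precisely, pairing inverse branches so that the hypothesis applies to every relevant pair of distinct compatible sequences on a given $I_{i}$. A related subtlety is ensuring that at least two distinct compatible sequences exist on each $I_{i}$ (used in both the independence step and in populating the cocycle verification with genuine alternatives), which follows from topological mixing of $T$ together with $N\geq 2$. The remaining estimates are routine applications of \eqref{4.1} and $r\in C^{Lip}(I)$.
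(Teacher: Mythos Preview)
Your argument is correct and follows essentially the same route as the paper: argue by contradiction, build the transfer function as the limit $v(x)=\lim_{n}\bigl(r_{n}\circ y^{(n)}(x)-r_{n}\circ y^{(n)}(p_{i})\bigr)$ along a compatible sequence of inverse branches (the paper writes this as $h_{y^{\infty}}(x)=\sum_{n\ge 1}\bigl(r\circ y^{(n)}(x)-r\circ y^{(n)}(x_{i})\bigr)$, which is the same partial sum), and then use the negated hypothesis to force $r+v-v\circ T$ to be constant on two-cylinders. The only organisational difference is that you isolate ``independence of $v$ from the choice of compatible sequence'' as a separate lemma and then invoke it in the cocycle check, whereas the paper leaves $h_{y^{\infty}}$ attached to a fixed sequence and absorbs the same cancellation directly into the final computation, writing $r\circ y-h_{y^{\infty}}+h_{y^{\infty}}\circ y=\lim_{N}\bigl(R_{v^{(N)},y^{(N)}}(x_{1})-R_{v^{(N)},y^{(N)}}(x_{2})\bigr)=0$ with $v^{(n)}=y^{(n-1)}\circ y$; your $z^{(n+1)}=y^{(n)}\circ\sigma_{ij}$ is the push-forward version of the paper's pull-back $v^{(n)}$, and the two computations are literally the same identity read in opposite directions. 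Your flagged ``obstacle'' about deploying the negation precisely is not a genuine gap: if the two compatible sequences agree the difference is zero, and otherwise the negated statement applies verbatim; the existence of enough branches is, as you note, immediate from topological mixing.
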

\begin{proof}
We prove this by contradiction. If not, then for any $D>0$, any $I_{i}$ and any $x_{1}\not=x_{2}\in I_{i}$ we have for any large enough $N$ and any two inverse branches $y^{\prime\prime}\not=y^{\prime}\in\mathcal{H}_{N}$,
\begin{equation}\label{5.1}
	|R_{y^{\prime\prime},y^{\prime}}(x_{1})-R_{y^{\prime\prime},y^{\prime}}(x_{2})|\le D|x_{1}-x_{2}|.
\end{equation}
We will prove the above implies $r$ is cohomologous to locally constant functions. 

Let $y^{\infty}$ be a set of countably many inverse branches of $\mathcal{H}_{1}$, namely $y^{\infty}=\{y_{j}\in\mathcal{H}_{1}\}_{j=1}^{\infty}$ where $y_{j+1}\circ y_{j}$ is well defined. We then define $y^{(n)}:=y_{n}\circ\cdots\circ y_{1}\in\mathcal{H}_{n}$. For each $I_{i}$, we fix a point $x_{i}\in I_{i}$. We define a function
$$h_{y^{\infty}}(x):=\sum_{n=1}^{\infty}r\circ y^{(n)}(x)-r\circ y^{(n)}(x_{i}),\quad x\in I_{i}.$$
By \eqref{4.1}, it is easy to show $h_{y^{\infty}}\in C^{Lip}(I)$. We claim that $r-h_{y^{\infty}}\circ T+h_{y^{\infty}}$ is locally constant which will complete the proof. Thus, it suffices to show that $r\circ y-h_{y^{\infty}}+h_{y^{\infty}}\circ y$ is constant on each $I_{i}$ for any $y\in\mathcal{H}_{1}$. By definition,
$$r\circ y-h_{y^{\infty}}+h_{y^{\infty}}\circ y=\sum_{n=1}^{\infty}r\circ v^{(n)}-r\circ y^{(n)}$$
where $v^{(1)}=y$ and $v^{(n)}=y^{(n-1)}\circ y$ if $k\ge2$. The key point is that, for any $x_{1},x_{2}\in I_{i}$, we have 
$$
\begin{aligned}
	&\bigg(\sum_{n=1}^{\infty}r\circ v^{(n)}(x_{1})-r\circ y^{(n)}(x_{1})\bigg)-\bigg(\sum_{n=1}^{\infty}r\circ v^{(n)}(x_{2})-r\circ y^{(n)}(x_{2})\bigg)\\
	=&\lim_{N\to\infty}R_{v^{(N)},y^{(N)}}(x_{1})-R_{v^{(N)},y^{(N)}}(x_{2})=0
\end{aligned}
$$
where the limit is implied by \eqref{5.1} with $y^{\prime\prime}=v^{(N)}$ and $y^{\prime}=y^{(N)}$. Therefore, we proved that $r\circ y-h_{y^{\infty}}+h_{y^{\infty}}\circ y$ is constant on each $I_{i}$ for any $y\in\mathcal{H}_{1}$. In particular, $r-h_{y^{\infty}}\circ T+h_{y^{\infty}}$ is constant on each $[ij]=I_{i}\cap T^{-1}I_{j}\not=\emptyset$ which completes the proof.
\end{proof}

We initiate the proof of Lemma \ref{Lemma 4.4} by fixing a large $n\in\mathbb{N}$ which satisfies 
\begin{equation}\label{5.2}
	4C_{3}C_{8}^{\alpha}\lambda^{\alpha n}|x_{1}-x_{2}|^{\alpha}\pi^{\alpha-1}\le \dfrac{D^{\alpha}}{12},
\end{equation}
and satisfies the conclusion of Lemma \ref{Lemma 4.1}, namely there exist two inverse branches $y_{1}\not=y_{2}\in\mathcal{H}_{n}$ such that 
\begin{equation}\label{6.4.2}
	|R_{y_{1},y_{2}}(x_{1})-R_{y_{1},y_{2}}(x_{2})|\ge D.
\end{equation}
One should note that, since $r\in C^{Lip}(I)$, we have $R_{y_{1},y_{2}}\in C^{Lip}(I)$ as well. Furthermore, by \eqref{4.1}, we have $|R_{y_{1},y_{2}}|_{Lip}\le C_{13}$ for some uniform constant $C_{13}>0$. Without loss of generality, we assume $R_{y_{1},y_{2}}(x_{1})<R_{y_{1},y_{2}}(x_{2})$ and $x_{1}<x_{2}$. We divide the interval $[R_{y_{1},y_{2}}(x_{1}),R_{y_{1},y_{2}}(x_{2})]$ into subintervals of size $\pi/3|b|$, and by connectivity we can find finitely many points $\{x^{\prime}_{k}\}_{k=1}^{p}\in [x_{1},x_{2}]$ such that
\begin{equation}\label{6.4.3}
	x^{\prime}_{k}<x_{k+1}^{\prime}\quad\text{and}\quad R_{y_{1},y_{2}}(x_{k}^{\prime})=R_{y_{1},y_{2}}(x_{1})+\frac{k\pi}{3|b|}.
\end{equation}
By \eqref{6.4.2}, we must have $\#\{x_{k}^{\prime}\}_{k}\ge \frac{3D|b|}{\pi}$, i.e., $p\ge \frac{3D|b|}{\pi}$.

\begin{Claim}
	There exist $ \frac{3D|b|}{2\pi}$ numbers $x_{k}^{\prime}$, i.e., exists a subset $\{x_{k_{j}}^{\prime}\}_{j=1}^{ \frac{3D|b|}{2\pi}}$ such that $|x^{\prime}_{k_{j}+1}-x^{\prime}_{k_{j}}|\le\frac{\pi}{D|b|}|x_{2}-x_{1}|$.
\end{Claim}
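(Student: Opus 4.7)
The plan is a straightforward pigeonhole/averaging argument on the consecutive gaps between the points $x_k'$. Since the points $x_1' < x_2' < \cdots < x_p'$ all lie inside $[x_1, x_2]$, the telescoping sum of gaps satisfies
\begin{equation*}
\sum_{k=1}^{p-1} (x'_{k+1} - x'_k) \le x_2 - x_1,
\end{equation*}
and we know $p \ge 3D|b|/\pi$ from the construction \eqref{6.4.3} together with \eqref{6.4.2}.

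First I would set $K := \{k\in\{1,\dots,p-1\} : x'_{k+1}-x'_k \le \tfrac{\pi}{D|b|}|x_2-x_1|\}$ and argue by contradiction. Assuming $\#K < 3D|b|/(2\pi)$, the complementary set of ``large gap'' indices has cardinality at least $p-1 - 3D|b|/(2\pi) \ge 3D|b|/(2\pi) - 1$ (using $p\ge 3D|b|/\pi$). Each such gap is strictly greater than $\tfrac{\pi}{D|b|}|x_2 - x_1|$, so summing only over these large-gap indices already gives
\begin{equation*}
\sum_{k=1}^{p-1}(x'_{k+1}-x'_k) \ \ge\ \Bigl(\tfrac{3D|b|}{2\pi}-1\Bigr)\cdot\tfrac{\pi}{D|b|}|x_2-x_1| \ =\ \tfrac{3}{2}|x_2-x_1| - \tfrac{\pi}{D|b|}|x_2-x_1|,
\end{equation*}
which, for $|b|$ large enough (which we may assume, since $|b|\ge 3$ and $D$ is fixed), contradicts $\sum(x'_{k+1}-x'_k)\le |x_2-x_1|$. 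Hence $\#K \ge 3D|b|/(2\pi)$, and relabeling the indices in $K$ as $\{k_j\}_{j=1}^{3D|b|/(2\pi)}$ yields the claimed subset.

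There is no real obstacle: the only mild bookkeeping issue is whether the constants line up cleanly (the ``$-1$'' above and the non-integrality of $3D|b|/(2\pi)$), but both are harmless; one can either take floors throughout or absorb the $O(1)$ discrepancy into the constant $D$ (or into the lower bound on $|b|$). The argument only uses the linear ordering of the $x_k'$ inside $[x_1,x_2]$ and the lower bound on $p$, so it does not invoke the Markov or hyperbolic structure at all.
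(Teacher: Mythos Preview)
Your argument is correct and is essentially the same pigeonhole-by-contradiction as the paper's: assume too few small gaps, hence at least $\tfrac{3D|b|}{2\pi}$ large gaps, sum them to get $\ge \tfrac{3}{2}|x_2-x_1|$, contradicting the telescoping bound. You are in fact slightly more careful than the paper about the $p-1$ vs.\ $p$ and non-integrality issues, but as you note these $O(1)$ discrepancies are harmless.
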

\begin{proof}
	We prove this by contradiction. If not, then there exists a subset $\{x_{k_{j}}^{\prime}\}_{j=1}^{ \frac{3D|b|}{2\pi}}$ such that $|x^{\prime}_{k_{j}+1}-x^{\prime}_{k_{j}}|\ge\frac{\pi}{D|b|}|x_{2}-x_{1}|$. Note that $\{[x^{\prime}_{k},x^{\prime}_{k+1}]\}_{k=1}^{p}$ forms a partition of $[x_{1},x_{2}]$. In particular, we should have $$|x_{2}-x_{1}|\ge\sum_{j=1}^{\frac{3D|b|}{2\pi}}|x^{\prime}_{k_{j}+1}-x^{\prime}_{k_{j}}|\ge\frac{3D|b|}{2\pi}\frac{\pi}{D|b|}|x_{2}-x_{1}|=\frac{3}{2}|x_{2}-x_{1}|,$$
	which contradicts $x_{1}\not=x_{2}$.
\end{proof}

By \eqref{6.4.3} and the above claim, for each $1\le j\le\frac{3D|b|}{2\pi}$, we have
\begin{equation}\label{6.4.4}
	|x^{\prime}_{k_{j}+1}-x^{\prime}_{k_{j}}|\le\frac{\pi}{D|b|}|x_{2}-x_{1}|\quad\text{and}\quad R_{y_{1},y_{2}}(x_{k_{j}+1}^{\prime})=R_{y_{1},y_{2}}(x_{k_{j}}^{\prime})+\frac{\pi}{3|b|}.
\end{equation}

\begin{Lemma}\label{Lemma 5.2}
There exists $\delta_{6}\in(0,1)$ such that for any $h \in C^{\alpha}(I)$ satisfying $|h|_{\alpha} \le 2C_{3}|b|^{\alpha}|h|_{\infty}$ and each $1\le j\le\frac{3D|b|}{2\pi}$, there exists $w_{j}\in[x^{\prime}_{k_{j}+1}, x^{\prime}_{k_{j}}]$ such that $|\mathcal{L}_{ib}^{n}h(w_{j})|\le(1-\delta_{6})|h|_{\infty}$.
\end{Lemma}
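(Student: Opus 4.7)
The plan is to exploit the phase discrepancy between the two inverse branches singled out by \eqref{6.4.2}. I would isolate their contributions by writing
\[
\mathcal{L}_{ib}^{n}h(x) = T_{1}(x) + T_{2}(x) + R(x),\qquad T_{i}(x) := e^{(\varphi_{n}-ibr_{n})\circ y_{i}(x)}\,h\circ y_{i}(x),
\]
with $R$ the sum over the remaining inverse branches. Since the normalization $\mathcal{L}_{\varphi}1 = 1$ gives $\mathcal{L}_{\varphi}^{n}1 = 1$, one has the crude bound $|R(x)| \le (1 - e^{\varphi_{n}\circ y_{1}(x)} - e^{\varphi_{n}\circ y_{2}(x)})|h|_{\infty}$, and compactness (with $n$ fixed) supplies a positive lower bound $\epsilon_{n} := \inf_{x,i} e^{\varphi_{n}\circ y_{i}(x)} > 0$. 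The problem therefore reduces to producing a point $w_{j} \in \{x_{k_{j}}^{\prime}, x_{k_{j}+1}^{\prime}\}$ at which $|T_{1}(w_{j})+T_{2}(w_{j})|$ falls below $(e^{\varphi_{n}\circ y_{1}(w_{j})} + e^{\varphi_{n}\circ y_{2}(w_{j})})|h|_{\infty}$ by a constant multiple of $|h|_{\infty}$ independent of $h$ and $b$.

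Before the case analysis, I would record a local oscillation estimate. Combining the hypothesis $|h|_{\alpha} \le 2C_{3}|b|^{\alpha}|h|_{\infty}$ with \eqref{4.1}, the spacing bound $|x^{\prime}_{k_{j}+1}-x^{\prime}_{k_{j}}| \le (\pi/D|b|)|x_{2}-x_{1}|$ from \eqref{6.4.4}, and the choice of $n$ in \eqref{5.2}, a direct computation yields
\[
|h\circ y_{i}(x_{k_{j}+1}^{\prime}) - h\circ y_{i}(x_{k_{j}}^{\prime})| \le \tfrac{\pi}{24}|h|_{\infty},\qquad i=1,2.
\]
In parallel, by construction the phase $bR_{y_{1},y_{2}}$ rotates by exactly $\pi/3$ across the interval, so the relative phase between $T_{1}$ and $T_{2}$ twists by $\pi/3$ while the moduli and arguments of $h\circ y_{i}$ drift by a controlled amount.

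Now fix a small threshold $\tau > 0$ and split into two cases. In the easy case where $|h\circ y_{i}(x_{k_{j}}^{\prime})| \le (1-\tau)|h|_{\infty}$ for some $i$, take $w_{j} = x_{k_{j}}^{\prime}$; then $|T_{1}(w_{j})|+|T_{2}(w_{j})| \le (e^{\varphi_{n}\circ y_{1}(w_{j})} + e^{\varphi_{n}\circ y_{2}(w_{j})})|h|_{\infty} - \tau\epsilon_{n}|h|_{\infty}$, which combined with the bound on $R$ gives the gap immediately. In the opposite case both moduli exceed $(1-\tau-\pi/24)|h|_{\infty}$ at both endpoints; writing $h\circ y_{i}(w) = r_{i}(w)e^{i\phi_{i}(w)}$, the law of cosines applied to the oscillation bound controls $|\phi_{i}(x_{k_{j}+1}^{\prime})-\phi_{i}(x_{k_{j}}^{\prime})|$ by a small constant, so for $\tau$ small enough the total phase $\Theta(w) := -bR_{y_{1},y_{2}}(w) + \phi_{1}(w) - \phi_{2}(w)$ shifts by at least $\pi/4$ across the interval. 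Hence at least one endpoint $w_{j}$ satisfies $|\Theta(w_{j})\bmod 2\pi| \ge \pi/8$, and expanding
\[
|T_{1}(w_{j})+T_{2}(w_{j})|^{2} = (|T_{1}(w_{j})|+|T_{2}(w_{j})|)^{2} - 2e^{\varphi_{n}\circ y_{1}(w_{j})+\varphi_{n}\circ y_{2}(w_{j})}r_{1}(w_{j})r_{2}(w_{j})(1-\cos\Theta(w_{j}))
\]
together with $|T_{1}|+|T_{2}| \le |h|_{\infty}$ and $\sqrt{A^{2}-B} \le A - B/(2A)$ converts the quadratic gap into a linear one of order $\epsilon_{n}^{2}(1-\cos(\pi/8))|h|_{\infty}$.

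The main technical obstacle is the tension between the two cases: the phase-cancellation argument insists that $\tau$ be small, so that the arguments $\phi_{i}$ barely rotate, whereas the magnitude-cancellation argument benefits from $\tau$ large. The reconciliation is the quantitative bound $1-\cos\Delta\phi_{i} \le (\pi/24)^{2}/(2(1-\tau-\pi/24)^{2})$ obtained from the law of cosines, which shows that the rotation of $\phi_{i}$ stays of order $\pi/24$ as long as $\tau$ is bounded away from $1-\pi/24$. Any fixed admissible $\tau$ (for example $\tau = 1/8$) therefore makes both cases deliver a uniform gap $\delta_{6} > 0$ depending only on $n$, $\tau$ and the geometric constants, completing the proof.
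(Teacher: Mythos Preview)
Your proposal is correct and follows essentially the same route as the paper: isolate the two distinguished branches, treat separately the easy case where one of $|h\circ y_i|$ is small and the hard case where both are large, and in the latter use the exact $\pi/3$ rotation of $bR_{y_1,y_2}$ against the small oscillation of the arguments of $h\circ y_i$ to force one endpoint to have relative phase bounded away from $2\pi\mathbb{Z}$. The paper differs only cosmetically---threshold $\tfrac12$ instead of your $1-\tau$, a direct bound on the argument oscillation yielding $|\Delta\theta_h\circ y_i|\le \pi/12$ in place of your law-of-cosines estimate, and a cited lemma for the final cancellation; the one point to tighten is that your inference ``shift $\ge \pi/4 \Rightarrow$ some endpoint $\ge \pi/8$ from $2\pi\mathbb{Z}$'' also requires the companion \emph{upper} bound on the shift (to rule out a jump near a nonzero multiple of $2\pi$), which your same estimate already supplies, and the numerical constant $\pi/4$ may need slight loosening once the law-of-cosines bound is made explicit.
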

\begin{proof}
We first deal with the easy case where there exists $x\in[x^{\prime}_{k_{j}+1}, x^{\prime}_{k_{j}}]$ such that $|h|\circ y_{1}(x)$ or $|h|\circ y_{2}(x)\le \frac{1}{2}|h|_{\infty}$. We then have
$$
|\mathcal{L}_{ib}^{n}h(x)|\le \sum_{y\in\mathcal{H}_{n}; y \neq y_{1}}e^{\varphi_{n}\circ y(x)}|h|\circ y(x)+e^{\varphi_{n}\circ y_{1}(x)}\dfrac{1}{2}|h|_{\infty}\le(1-2^{-1}e^{-n|\varphi|_{\infty}})|h|_{\infty},
$$
or
$$
|\mathcal{L}_{ib}^{n}h(x)|\le \sum_{y\in\mathcal{H}_{n}; y \neq y_{2}}e^{\varphi_{n}\circ y(x)}|h|\circ y(x)+e^{\varphi_{n}\circ y_{2}(x)}\dfrac{1}{2}|h|_{\infty}\le(1-2^{-1}e^{-n|\varphi|_{\infty}})|h|_{\infty}.
$$
Thus, we have $|\mathcal{L}_{ib}^{n}h(x)|\le(1-2^{-1}e^{-n|\varphi|_{\infty}})|h|_{\infty}$. In particular, we can choose $w_{j}=x$ and $\delta_{6}=2^{-1}e^{-n|\varphi|_{\infty}}$ to complete the proof of this case.

It remains to deal with the difficult case where $|h|\circ y_{1}(x)$ and $|h|\circ y_{2}(x)\ge \frac{1}{2}|h|_{\infty}$ for any $x\in[x^{\prime}_{k_{j}+1}, x^{\prime}_{k_{j}}]$. In this case, for any $x\in[x^{\prime}_{k_{j}+1}, x^{\prime}_{k_{j}}]$, we can write $h\circ y_{1}(x)=|h|\circ y_{1}(x)e^{i\theta_{h}\circ y_{1}(x)}$ and $h\circ y_{2}(x)=|h|\circ y_{2}(x)e^{i\theta_{h}\circ y_{2}(x)}$with $\theta_{h}\in C^{\alpha}(I,\mathbb{R})$. Since $|h|_{\alpha} \le 2C_{3}|b|^{\alpha}|h|_{\infty}$ and $|h|\circ y_{1}(x)$ and $|h|\circ y_{2}(x)\ge \frac{1}{2}|h|_{\infty}$ for any $x\in[x^{\prime}_{k_{j}+1}, x^{\prime}_{k_{j}}]$, it not difficult to obtain that
\begin{equation*}
	|\theta_{h}\circ y_{1}(x)-\theta_{h}\circ y_{1}(x^{\prime})|\le 4C_{3}|b|^{\alpha}|y_{1}(x)-y_{1}(x^{\prime})|^{\alpha},
\end{equation*}
and
\begin{equation*}
	|\theta_{h}\circ y_{2}(x)-\theta_{h}\circ y_{2}(x^{\prime})|\le 4C_{3}|b|^{\alpha}|y_{2}(x)-y_{2}(x^{\prime})|^{\alpha},
\end{equation*}
for any $x,x^{\prime}\in[x^{\prime}_{k_{j}+1}, x^{\prime}_{k_{j}}]$. By \eqref{4.1} and \eqref{5.2}, we then further bound the above two bounds as follows:
\begin{equation}\label{6.4.5}
	|\theta_{h}\circ y_{1}(x)-\theta_{h}\circ y_{1}(x^{\prime})|\quad\text{and}\quad|\theta_{h}\circ y_{2}(x)-\theta_{h}\circ y_{2}(x^{\prime})|\le \frac{D^{\alpha}|b|^{\alpha}\pi^{1-\alpha}}{12}\frac{|x-x^{\prime}|^{\alpha}}{|x_{1}-x_{2}|^{\alpha}},
\end{equation}
for any $x,x^{\prime}\in[x^{\prime}_{k_{j}+1}, x^{\prime}_{k_{j}}]$. Then, by \eqref{6.4.4} and \eqref{6.4.5}, we have
\begin{equation*}
	\dfrac{\pi}{6}\le|(bR_{y_{1},y_{2}}+\theta_{h}\circ y_{1}-\theta_{h}\circ y_{2})(x_{k_{j}+1}^{\prime})-(bR_{y_{1},y_{2}}+\theta_{h}\circ y_{1}-\theta_{h}\circ y_{2})(x_{k_{j}}^{\prime})|\le \dfrac{\pi}{2}.
\end{equation*}
As a corollary of the above estimate, it follows that
\begin{equation}\label{6.4.6}
	|(bR_{y_{1},y_{2}}+\theta_{h}\circ y_{1}-\theta_{h}\circ y_{2})(x_{k_{j}+1}^{\prime})-2\pi\mathbb{Z}|\ge \frac{\pi}{12},
\end{equation}
or
\begin{equation}\label{6.4.7}
	|(bR_{y_{1},y_{2}}+\theta_{h}\circ y_{1}-\theta_{h}\circ y_{2})(x_{k_{j}}^{\prime})-2\pi\mathbb{Z}|\ge \frac{\pi}{12}.
\end{equation}
Therefore, utilizing an elementary result from \cite[Lemma 4.1]{Zha24}, and combining \eqref{6.4.6} and \eqref{6.4.7}, for some uniform constant $\varepsilon_{7}\in(0,1)$ and $\sigma=x_{k_{j}}^{\prime}$ or $x_{k_{j}+1}^{\prime}$, we obtain:
\begin{equation}\label{6.4.8}
	\begin{aligned}
		&|e^{(\varphi-ibr)_{n}\circ y_{1}(x_{\sigma}^{\prime})}h\circ y_{1}(x_{\sigma}^{\prime})+e^{(\varphi-ibr)_{n}\circ y_{2}(x_{\sigma}^{\prime})}h\circ y_{2}(x_{\sigma}^{\prime})|\\
		\le&(1-\varepsilon_{7})e^{\varphi_{n}\circ y_{1}(x_{\sigma}^{\prime})}|h|_{\infty}+e^{\varphi_{n}\circ y_{2}(x_{\sigma}^{\prime})}|h|_{\infty},
	\end{aligned}
\end{equation}
or
\begin{equation}\label{6.4.9}
	\begin{aligned}
		&|e^{(\varphi-ibr)_{n}\circ y_{1}(x_{\sigma}^{\prime})}h\circ y_{1}(x_{\sigma}^{\prime})+e^{(\varphi-ibr)_{n}\circ y_{2}(x_{\sigma}^{\prime})}h\circ y_{2}(x_{\sigma}^{\prime})|\\
		\le&e^{\varphi_{n}\circ y_{1}(x_{\sigma}^{\prime})}|h|_{\infty}+(1-\varepsilon_{7})e^{\varphi_{n}\circ y_{2}(x_{\sigma}^{\prime})}|h|_{\infty}.
	\end{aligned}
\end{equation}
In particular, this implies $|\mathcal{L}^{n}_{ib}h(x_{k_{j}}^{\prime})|$ or $|\mathcal{L}^{n}_{ib}h(x_{k_{j}+1}^{\prime})|\le(1-\varepsilon_{7}e^{-n|\varphi_{a}|_{\infty}})|h|_{\infty}$. Therefore, in this case, we can choose $\delta_{6}=\varepsilon_{7}e^{-n|\varphi|_{\infty}}$ and $w_{j}=x_{k_{j}}^{\prime}$ or $x_{k_{j}+1}^{\prime}$ to complete the proof.
\end{proof}

By Lemma \ref{Lemma 4.1}, provided $n$ is large sufficiently, it is not difficult to show that $|\mathcal{L}^{n}_{ib}h|_{\alpha}\le 2C_{3}|b|^{\alpha}|h|_{\infty}$ for any $h \in C^{\alpha}(I)$ with $|h|_{\alpha} \le 2C_{3}|b|^{\alpha}|h|_{\infty}$. Then, by Lemma \ref{Lemma 5.2}, for each $1\le j\le\frac{3D|b|}{2\pi}$ and any $|x-w_{j}|\le \frac{\varepsilon_{6}}{4C_{3}|b|}$, we have $|\mathcal{L}^{n}_{ib}h(x)|\le(1-\varepsilon_{6}/2)|h|_{\infty}$. Since $\mu_{\varphi}$ is the SRB measure of $T:I\to I$, the subset $B:=\bigsqcup_{j=1}^{\frac{3D|b|}{2\pi}}B(w_{j}, \frac{\varepsilon_{6}}{4C_{3}|b|})$ has a uniformly lower bounded $\mu_{\varphi}$-measure which completes the proof of Lemma \ref{Lemma 4.4}.

\begin{Remark}\label{Remark on rapid mixing}
If the roof function \( r \) is \( C^{\alpha} \) for some \( \alpha \in (0,1) \) and is not cohomologous to locally constant functions, then it is possible to show that the semiflow \( \phi_{t}: I_{r} \to I_{r} \) is rapidly mixing for any Gibbs measure. Indeed, it is straightforward to observe that the weaker UNI condition derived in Lemma \ref{Lemma 5.1} remains valid when the roof function \( r \) is only H\"older continuous. Then, one can adjust the proof of Lemma \ref{Lemma 4.4} to establish a modest cancellation estimate on a small set. By combining this argument with techniques from \cite{Zha24}, it is feasible to demonstrate that the semiflow \( \phi_{t}: I_{r} \to I_{r} \) exhibits rapid mixing for any Gibbs measure.
\end{Remark}

\begin{Remark}
	In this paper, we focus solely on flows and semiflows with uniform hyperbolicity, specifically suspension semiflows of one-dimensional expanding maps. Our main objective is to provide a mechanism for stretched-exponential mixing in a setting that is as simple as possible without losing generality. However, with appropriate adjustments, it becomes evident that our argument can be extended to suspension semiflows of Young towers. 
\end{Remark}

\begin{Remark}
	It is interesting to explore whether the argument presented here can be generalized to systems with discontinuities. For instance, the examples considered in \cite{Esl17}, where the author proved stretched-exponential mixing for circle extensions of one-dimensional expanding maps with discontinuities under the assumption that the skew function is \(C^{1}\). It is possible to weaken the \(C^{1}\) setting to a Lipschitz setting.
\end{Remark}

 \end{document}